\newcounter{lletres}
\newtheorem{Talpha}[lletres]{Theorem}
\newtheorem{Lalpha}[lletres]{Lemma}
\newtheorem{thm}{Theorem}
\newtheorem{defi}[thm]{Definition}
\newtheorem{lma}[thm]{Lemma}
\newtheorem{obs}[thm]{Remark}
\newcommand{\D}{\mathbb D}
\newcommand{\N}{\mathbb N}
\newcommand{\U}{\mathcal U}
\newcommand{\B}{\mathcal B}
\newcommand{\Hi}{H^\infty}
\newcommand{\Hp}{H^p}
\newcommand{\eps}{\varepsilon}
\newcommand{\tdel}{\widetilde\Delta}
\newcommand{\hdel}{\widehat{\Delta}}
\begin{document}

\title[Interpolation and sampling for analytic selfmappings...]{Interpolation and sampling for analytic selfmappings of the disc.}
\author[Nacho Monreal Gal\'{a}n]{Nacho Monreal Gal\'{a}n}
\address{Nacho Monreal Gal\'{a}n, Departament of Mathematics, University of Crete, Voutes Campus, 70013 Heraklion, Crete, Greece.}
\email{nacho.mgalan@gmail.com}
\author[Michael Papadimitrakis]{Michael Papadimitrakis}
\address{Michael Papadimitrakis, Departament of Mathematics, University of Crete, Voutes Campus, 70013 Heraklion, Crete, Greece.}
\email{mihalis.papadimitrakis@gmail.com}

\thanks{\noindent 2010 Mathematics Subject Classification: 30H05, 30E05 \\
The first author is supported in part by the projects MTM2011-24606 and by the research project PE1(3378) implemented within the framework of the Action 
``Supporting Postdoctoral Researchers'' of the Operational Program ``Education and Lifelong Learning'' (Action's Beneficiary: General Secretariat for 
Research and Technology), co-financed by the European Social Fund (ESF) and the Greek State.}

\begin{abstract}
Two different problems are considered here. First, a version of Schwarz-Pick Lemma for $n$ points leads to an interpolation problem for analytic functions 
from the disc into itself, which may be considered as a particular case of the classical Nevanlinna-Pick interpolation problem. Second, a characterization 
of sampling sequences for this class of functions is given.
\end{abstract}

\maketitle

\section{Introduction.}
 
\noindent Let $\Hi$ be the space of bounded analytic functions in
the open unit disc $\D$ of the complex plane, and let $\U$
be the  set of functions $f \in \Hi$ with $\|f\|_\infty = \sup \{|f(z)| :
z \in \D\} \leq 1$. The Nevanlinna-Pick interpolation problem says:
\begin{equation}\label{NP}
\text{Given }\{z_n\},\,\{w_n\}\,\text{in }\D\text{, find }f\in\U\, :\,f(z_n)=w_n,\,n\in\N.
\end{equation}
Nevanlinna in \cite{Nevan} and Pick in \cite{Pick} proved independently that this problem has a solution
if and only if for all $N\in\N$ the matrix
$$\left(\frac{1-w_i\overline w_j}{1-z_i\overline z_j}\right)_{i,j=1,\dots,N}$$
is positive semidefinite. This theorem is the root of a very active field connected with many other topics, which may be found for example in \cite{AgMc}. 
However, the matrix condition is not easy to compute in general, and it does not give information about the geometry of the sequence $\{z_n\}$. Besides, the
Nevanlinna-Pick problem may be seen as a particular case of Carleson's celebrated result on interpolating sequences for $\Hi$, which we recall next.

\smallskip

\noindent A sequence of points $\{z_n\}$ in the unit disc is called an interpolating sequence if for every bounded sequence of values $\{w_n\}$ there exists a 
function $f\in\Hi$ such that $f(z_n)=w_n$, $n=1,2,\dots$  In his work \cite{Carleson} Carleson proved that $\{z_n\}$ is an interpolating sequence if and only if 
$\{z_n\}$ is a separated sequence and there exists a constant $M>0$ such that
$$\sum_{z_n\in Q}(1-|z_n|)\leq M\ell(Q)$$
for any Carleson square $Q$. A Carleson square is a set $Q$ of the form
\begin{equation*}
Q=\left\{r e^{i\theta}:\ 0<1-r<\ell(Q),\
|\theta-\theta_0|<\ell(Q)\right\}.
\end{equation*}
A sequence of points $\{z_n\}$ in the unit disc is called separated, with constant of separation $\eta>0$, if 
$$\displaystyle\inf_{i\neq
j}\beta(z_i,z_j)=\eta>0,$$ 
where $\beta(z,w)$ denotes the hyperbolic distance between $z$ and $w$ in $\D$.

\medskip

\noindent This geometric description of interpolating sequences has had a wide impact in Complex Analysis during the last decades, since it has offered a method 
of studying many different interpolation problems, as one may check for example in \cite{Seip}.

\medskip

\noindent In \cite{MMN} the authors considered a situation which may be understood as intermediate between the Nevanlinna-Pick and the Carleson interpolation problems. 
The setup of this specific problem was motivated by the Schwarz-Pick Lemma, which asserts that if $f\in\U$ and $z,w\in\D$, then 
$$\beta(f(z),f(w))\leq\beta(z,w).$$ 
Furthermore, equality holds for a pair (and then all) points of the disc if and only if $f$ is an automorphism of the disc (see for example \cite{Gar}). 

\medskip 

\noindent Our main result in this paper is a generalization of the result in \cite{MMN}. We will address a different interpolation problem, motivated by a suitable 
generalization of the Schwarz-Pick Lemma. This problem is described in Section 2, while Section 3 contains the proof of the main result.

\medskip

\noindent Besides, the Schwarz-Pick Lemma may motivate a definition of sampling sequence for the class of functions $\U$. Section 4 will be devoted to the study 
of this problem, so it may be read independently from the previous sections.

\bigskip

\section{Interpolating sequences of order $n$ for $\U$.}

\noindent Let us start by explaining in detail the result in \cite{MMN} that we will generalize here. In order to set an interpolation problem, the target space 
is restricted by the following definition: A sequence of points $Z=\{z_n\}$ in $\D$ is an interpolating sequence for $\U$ if there exists $\eps>0$ only depending 
on $Z$ such that for any sequence of values $W=\{w_n\}$ in $\D$ satisfying the compatibility condition
\begin{equation}\label{cc1}
\beta(w_m,w_n)\leq \eps \beta(z_m,z_n),\ \ \ n,m=1,2,\dots,
\end{equation}
then there exists $f\in\U$ such that $f(z_n)=w_n$ for $n\in\N$. With this definition the following characterization was proved.

\begin{Talpha}\label{th_MMN}
 A sequence $Z$ of distinct points in the unit disc is an interpolating sequence for $\U$ if and only if the following two conditions hold:
 \begin{itemize}
  \item[\textbf{(a)}] $Z=Z^{(1)}\cup Z^{(2)}$, where $Z^{(i)}$ is a separated sequence for $i=1,2$.
  \item[\textbf{(b)}] There exist constants $M>0$ and $0<\alpha<1$ such that for any Carleson square one has
   \begin{equation*}
    \#\left(Z\cap\left\{z\in Q\, :\, 2^{-m-1}\ell(Q)<1-|z|\leq2^{-m}\ell(Q)\right\}\right)\leq M2^{\alpha m}
   \end{equation*}
   for any $m=1,2,\dots$
 \end{itemize}
\end{Talpha}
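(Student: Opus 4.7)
The two implications split naturally: necessity of \textbf{(a)} and \textbf{(b)} is obtained by testing the interpolation property against carefully chosen data, while sufficiency requires an explicit construction of an interpolant. For the necessity of \textbf{(b)}, I would fix a Carleson square $Q$, an integer $m\geq 1$, and the subfamily of points $z_n\in Z$ lying in the dyadic shell $\{z\in Q:2^{-m-1}\ell(Q)<1-|z|\leq 2^{-m}\ell(Q)\}$. After pulling back by a M\"obius automorphism sending this shell into a fixed compact subset of $\D$, I would choose values $w_n$ respecting \eqref{cc1} which are spread out as much as that condition allows. The resulting interpolant $f\in\U$ then has many prescribed values in a fixed compact set, so a Blaschke-product/Jensen argument applied to $f$ forces the cardinality of the shell to be bounded by $M2^{\alpha m}$, with $\alpha<1$ depending only on $\eps$. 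The necessity of \textbf{(a)} follows by a greedy decomposition: build $Z^{(1)}$ by picking points of $Z$ one by one under a fixed hyperbolic-separation constraint, and let $Z^{(2)}$ collect the rest; that $Z^{(2)}$ is itself separated comes from a local Pick-matrix obstruction, showing three points of $Z$ cannot cluster hyperbolically.

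For the sufficiency direction, assume \textbf{(a)} and \textbf{(b)} and decompose $Z=Z^{(1)}\cup Z^{(2)}$ as above. For each separated piece $Z^{(i)}$ and the restricted data, I would construct an auxiliary bounded analytic $g_i$ with $g_i(z_n)=w_n$ using the standard Earl-type explicit formula built from the Blaschke product vanishing on $Z^{(i)}$. Condition \textbf{(b)} combined with separation controls $\|g_i\|_\infty$ by a constant depending only on $M$, $\alpha$, and the separation constants; choosing $\eps$ sufficiently small in \eqref{cc1} then forces this norm to be strictly less than $1$. Finally, I would glue $g_1$ and $g_2$ into a single $f\in\U$ via a partition-of-unity cutoff adapted to the two separated sets, and correct the resulting non-analytic function by solving a $\bar\partial$-equation with $L^\infty$ bounds in the style of the Koosis/Jones construction.

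\textbf{Main obstacle.} The hard point is obtaining the \emph{sharp} bound $\|f\|_\infty\leq 1$ rather than merely $\|f\|_\infty\lesssim 1$. Every building block of the construction --- the explicit separated interpolant, the $\bar\partial$-correction, and the patching cutoff --- yields $\Hi$-control only up to multiplicative constants, and absorbing those constants into the sharp bound $\leq 1$ requires a careful quantitative use of the smallness of $\eps$ against the constants coming from \textbf{(b)}. Verifying that one can actually take $\alpha<1$ (rather than the generic Carleson exponent $\alpha=1$) is precisely the place where this quantitative bookkeeping becomes delicate, and is the crux of the argument.
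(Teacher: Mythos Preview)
The paper does not prove Theorem~A: it is quoted verbatim from the reference \cite{MMN} and used as a known input. So there is no proof in the paper to compare against directly. That said, the paper \emph{does} expose the machinery of \cite{MMN} through Lemma~D and the sufficiency argument for Theorem~\ref{main_th}, and this is enough to see that your sufficiency strategy diverges from theirs in a way that matters.

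Your plan is to interpolate on each separated piece $Z^{(i)}$ by an Earl-type formula, patch with a cutoff, and repair analyticity by solving $\bar\partial$ with $L^\infty$ bounds. The approach in \cite{MMN}, as reflected in Lemma~D and formula~\eqref{solution}, is structurally different: one first builds $f_1\in\U$ interpolating on $Z^{(1)}$ with the extra control \eqref{prop1}--\eqref{prop2}, introduces the outer function $E_1$ with boundary modulus $1-|f_1|$, and then writes the full solution as $f=f_1-B_1 E_1\widetilde f$ with $\widetilde f\in\U$ solving an auxiliary problem on $Z\setminus Z^{(1)}$. The point of the outer factor is exactly the obstacle you flag: on $\partial\D$ one has $|f|\le |f_1|+|E_1|=|f_1|+(1-|f_1|)=1$, so the sharp bound $\|f\|_\infty\le1$ is built into the ansatz rather than recovered by shrinking $\eps$ against uncontrolled multiplicative constants. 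A $\bar\partial$ correction produces an additive error whose sup norm you can make small but not literally absorb into $1$, and there is no evident mechanism in your outline for forcing $\|f\|_\infty\le1$ rather than $\le 1+o(1)$; this is a genuine gap, not just bookkeeping.

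For necessity, your sketch is closer in spirit to what the paper does for the order-$(n-1)$ case: the separation condition \textbf{(a)} is forced by prescribing values (mostly zero, one nonzero of size $\eps x$) on a putative cluster and reading off a lower bound on the cluster radius from the Blaschke factorization of the interpolant; your ``Pick-matrix obstruction'' is a reasonable variant. For \textbf{(b)}, the paper gives no details beyond citing \cite{MMN}.
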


\medskip

\noindent In \cite{MMN} the authors explain that the main condition in the description here is the density condition \textbf{(b)}, while the separation 
condition \textbf{(a)} appears because the problem is defined in terms of first differences. Then, one may wonder what may happen to this result if one modifies 
the definition using higher order differences. To this aim, we will need an appropriate generalization of the Schwarz-Pick Lemma, which is presented below.

\medskip

\noindent In \cite{BM} one may find a version of the classical Schwarz-Pick Lemma involving three points. This was extended in \cite{BRW} for $n$ points, doing a 
simple iteration of the result by Beardon and Minda. Both works pointed out the analogies between the role played by polynomials in the Euclidean setting and 
finite Blaschke products in the hyperbolic setting. Recall that a finite Blaschke product is a function of the form
$$\prod_{j=1}^n\frac{|z_j|}{z_j}\frac{z_j-z}{1-\overline z_j z},$$
where $z_j\in\D$. In order to state this result, Beardon and Minda first defined the complex pseudohyperbolic distance between $z,z_j\in\D$ as follows:
$$[z,z_j]:=\frac{z_j-z}{1-\overline z_j z},$$
which for a fixed $z_j$ represents an automorphism of the disc. This name is justified by the fact that the pseudohyperbolic 
distance between $z$ and $z_j$ is defined as 
$$\rho(z,z_j)=\left|[z,z_j]\right|.$$
Then the hyperbolic distance between $z$ and $z_j$ in $\D$ is 
$$\beta(z,z_j)=\log\frac{1+\rho(z,z_j)}{1-\rho(z,z_j)}.$$

\medskip

\noindent For a fixed $z_1\in\D$ and $f\in\U$ we define the hyperbolic difference quotient as
\begin{displaymath}
\Delta f(z;z_1):=\left\{\begin{array}{ccc}
\dfrac{[f(z),f(z_1)]}{[z,z_1]}\hfill&\text{ if }\hfill& z\in\D\setminus\{z_1\}\hfill\\\\
f^h(z_1)\hfill&\text{ if }\hfill& z=z_1,
\end{array}\right.
\end{displaymath}
where $f^h(z_1)$ is obtained as a limit and represents the hyperbolic derivative of $f$ at $z_1$, that is,
\begin{equation}\label{hyp_der}
 f^h(z_1)=\frac{(1-|z_1|^2)f'(z_1)}{1-|f(z_1)|^2}.
\end{equation}
The expression $\Delta f(z;z_1)$ defines a function in $\U$, since it is analytic and, as a consequence of the Swcharz-Pick Lemma, we have $|\Delta f(z;z_1)|\leq1$. 
We may now iterate this process to get differences of higher order. Writing $\Delta^0f(z)=f(z)$, we fix $z_1,\dots,z_n$ in $\D$ and for $k=1,\dots,n$ we 
define the $k$-th hyperbolic difference quotient as follows:
$$\Delta^k f(z;z_1,\dots,z_k)=\frac{[\Delta^{k-1}f(z;z_1,\dots,z_{k-1}),\Delta^{k-1}f(z_k;z_1,\dots,z_{k-1})]}{[z,z_k]},$$
interpreted as a limit when $z=z_k$. Clearly
$$\Delta^1(\Delta^{k-1}f(\cdot;z_1,\dots,z_{k-1}))(z;z_k)=\Delta^k f(z;z_1,\dots,z_k).$$
The following multi-point Schwarz-Pick Lemma appeared in \cite{BRW}.
\begin{Talpha}\label{SP_general}
 Fix pairwise distinct points $z_1,\dots,z_k$ in $\D$. Then, for all $f\in\U$ and $v,w$ in $\D$,
 $$\beta(\Delta^k f(v;z_1,\dots,z_k),\Delta^k f(w;z_1,\dots,z_k))\leq\beta(v,w).$$
 Equality holds for a pair of distinct points (and then for all) $v$ and $w$ in $\D$ if and only if $f$ is a Blaschke product of degree $k+1$.
\end{Talpha}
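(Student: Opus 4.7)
The plan is to prove the inequality by induction on $k$, aiming for the stronger statement that $\Delta^k f(\cdot;z_1,\ldots,z_k)\in\U$ as a function of its first argument; the contraction for $\beta$ then follows by applying the classical Schwarz-Pick Lemma (the case $k=0$) directly to $\Delta^k f$. The iterative identity
$$\Delta^k f(z;z_1,\ldots,z_k) = \Delta^1\bigl(\Delta^{k-1} f(\cdot;z_1,\ldots,z_{k-1})\bigr)(z;z_k),$$
already noted in the excerpt, reduces the inductive step to a single claim: if $g\in\U$ and $c\in\D$, then $\Delta^1 g(\cdot;c)\in\U$.

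\textbf{Single-step claim.} On $\D\setminus\{c\}$, $\Delta^1 g(z;c)=[g(z),g(c)]/[z,c]$. The numerator vanishes at $z=c$ and the denominator has a simple zero there, so the quotient extends holomorphically to all of $\D$, its value at $z=c$ being the hyperbolic derivative $g^h(c)$. Schwarz-Pick applied to $g$ gives $\rho(g(z),g(c))\leq\rho(z,c)$, so $|\Delta^1 g(z;c)|\leq 1$ off the diagonal and, by continuity, also at $z=c$. Thus $\Delta^1 g(\cdot;c)\in\U$; iterating $k$ times proves $\Delta^k f\in\U$, and the main inequality follows.

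\textbf{Equality case.} If equality holds at some pair $v,w$, then by the equality clause of the classical Schwarz-Pick Lemma, $\Delta^k f$ is a disc automorphism, i.e.\ a Blaschke product of degree $1$. I would then argue by downward induction that if $\Delta^{j} f(\cdot;z_1,\ldots,z_j)$ is a finite Blaschke product of degree $d$, then $\Delta^{j-1} f(\cdot;z_1,\ldots,z_{j-1})$ is a Blaschke product of degree $d+1$. Indeed, the defining identity rearranges to
$$\bigl[\Delta^{j-1} f(z),\Delta^{j-1} f(z_j)\bigr] = [z,z_j]\cdot\Delta^{j} f(z;z_1,\ldots,z_j),$$
whose right side is a Blaschke product of degree $d+1$, and inverting the M\"obius map $w\mapsto[w,\Delta^{j-1} f(z_j)]$ preserves the class of finite Blaschke products together with their degree. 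Descending from $d=1$ at level $k$ down to $j=0$ yields that $f$ itself is a Blaschke product of degree $k+1$. The converse is obtained by the same identity read in the opposite direction: each application of $\Delta^1$ lowers the Blaschke degree by exactly one, so $k$ applications turn a degree-$(k+1)$ Blaschke product into a degree-$1$ one, for which Schwarz-Pick is an equality.

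\textbf{Main obstacle.} The analyticity and boundedness step is a routine repeated application of Schwarz-Pick. The delicate point is the degree bookkeeping in the equality case: one must verify that no zero cancellations occur when passing between $\Delta^{j-1} f$ and $\Delta^{j} f$, so that each application of $\Delta^1$ changes the degree by exactly one. This ultimately reduces to the fact that the maps $[\cdot,c]$ are disc automorphisms and therefore preserve the factored structure of finite Blaschke products, but this preservation must be tracked carefully through the $k$-fold iteration.
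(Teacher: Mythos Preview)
The paper does not actually prove this statement: Theorem~\ref{SP_general} is quoted from \cite{BRW} and used as a black box, so there is no in-paper proof to compare against. That said, your argument is correct and is essentially the standard one (and the one given in \cite{BRW}): the inductive step that $\Delta^1 g(\cdot;c)\in\U$ whenever $g\in\U$ is exactly the content of the classical Schwarz--Pick Lemma, and the iterative identity you quote then gives $\Delta^k f\in\U$, from which the $\beta$-contraction is immediate.

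Your treatment of the equality case is also correct, including the point you flag as delicate. The key observations you need are: (i) if $\Delta^{j}f$ is a finite Blaschke product of degree $d\geq 1$ then $\Delta^{j-1}f(z_j)$ lies strictly inside $\D$ (otherwise $\Delta^{j-1}f$ would be a unimodular constant by the maximum principle and $\Delta^{j}f$ would vanish identically), so the M\"obius map $w\mapsto[w,\Delta^{j-1}f(z_j)]$ is a genuine disc automorphism and can be inverted; and (ii) in the converse direction, if $h$ is a finite Blaschke product of degree $d+1$ then $[h(z),h(c)]$ is again a degree-$(d+1)$ Blaschke product vanishing at $z=c$, so factoring out $[z,c]$ leaves a Blaschke product of degree exactly $d$, with no spurious cancellation. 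These two facts make the degree bookkeeping rigorous in both directions.
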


\medskip

\noindent Consider now two sequences $Z=\{z_n\}$ and $W=\{w_n\}$ in $\D$. In the line of \cite{MMN}, we want to define an interpolation problem in $\U$ and 
give conditions on $Z$ so that there exists a function in $\U$ interpolating $W$ at $Z$. To this end, let us define the hyperbolic difference quotients for the sequence $W$. 
Fix $z_1,\dots,z_n$ in $Z$ and the corresponding values $w_1,\dots,w_n$. Writing $\Delta^0_j:=w_j$, we define
$$\Delta^k_j:=\frac{[\Delta^{k-1}_j,\Delta^{k-1}_k]}{[z_j,z_k]},\text{ for }k=1,\dots,n-1\text { and }j=k+1,\dots,n.$$
It is easy to see that each $\Delta^k_j$ depends on $z_1,\dots,z_k,z_j$ and $w_1,\dots,w_k,w_j$. Moreover, if $f$ is a solution of (\ref{NP}) then
\begin{equation}\label{estab}
 \Delta^k_j=\Delta^kf(z_j;z_1,\dots,z_k)\,\,\text{ for }k=0,\dots,n-1,\,\,j=k+1,\dots,n,
\end{equation}
\noindent (see \cite[Lemma 4.2]{BRW}) so necessarily $|\Delta^k_j|\leq1$ by Theorem \ref{SP_general}.

\medskip

\noindent The complete list of hyperbolic difference quotients may be represented in the following table, which will be very useful for our aim:
\begin{displaymath}
 \begin{array}{cc|llccc|}
  \cline{3-7} 
  & & \multicolumn{5}{c|}{\text{hyperbolic difference quotients}}\\
  \hline
  \multicolumn{1}{|c|}{Z} & \multicolumn{1}{|c|}{W} & 1 & 2 & \cdots & n-2 & n-1 \\
  \hline 
  \multicolumn{1}{|c|}{\vspace{0cm}} & \multicolumn{1}{|c|}{\vspace{0cm}} & & & & & \\
  \multicolumn{1}{|c|}{z_1} & \multicolumn{1}{|c|}{w_1=\Delta^0_1} & & & & & \\ 
  \multicolumn{1}{|c|}{\vspace{0cm}} & \multicolumn{1}{|c|}{\vspace{0cm}} & & & & & \\
  \multicolumn{1}{|c|}{z_2} & \multicolumn{1}{|c|}{w_2=\Delta^0_2} & \Delta^1_2 & & & & \\
  \multicolumn{1}{|c|}{\vspace{0cm}} & \multicolumn{1}{|c|}{\vspace{0cm}} & & & & & \\
  \multicolumn{1}{|c|}{z_3} & \multicolumn{1}{|c|}{w_3=\Delta^0_3} & \Delta^1_3 & \Delta^2_3 & & &\\
  \multicolumn{1}{|c|}{\vspace{0cm}} & \multicolumn{1}{|c|}{\vspace{0cm}} & & & & & \\
  \multicolumn{1}{|c|}{\vdots} & \multicolumn{1}{|c|}{\vdots} & \multicolumn{1}{c}{\vdots} & \multicolumn{1}{c}{\vdots} & \ddots & &\\
  \multicolumn{1}{|c|}{\vspace{0cm}} & \multicolumn{1}{|c|}{\vspace{0cm}} & & & & & \\
  \multicolumn{1}{|c|}{z_{n-1}} & \multicolumn{1}{|c|}{w_{n-1}=\Delta^0_{n-1}} & \Delta^1_{n-1} & \Delta^2_{n-1} & \cdots & \Delta^{n-2}_{n-1} & \\
  \multicolumn{1}{|c|}{\vspace{0cm}} & \multicolumn{1}{|c|}{\vspace{0cm}} & & & & & \\
  \multicolumn{1}{|c|}{z_{n}} & \multicolumn{1}{|c|}{w_{n}=\Delta^0_{n-1}} & \Delta^1_{n} & \Delta^2_{n} & \cdots & \Delta^{n-2}_{n} & \Delta^{n-1}_n\\
  \multicolumn{1}{|c|}{\vspace{0cm}} & \multicolumn{1}{|c|}{\vspace{0cm}} & & & & & \\
  \hline
 \end{array}
\end{displaymath}

\noindent Observe that the definition of interpolating sequence given in the previous section may be rewritten as 
$$\beta(\Delta^0_i,\Delta^0_j)\leq \eps\beta(z_i,z_j)$$
for any $i,j\in\N$. Also, we should remark that the hyperbolic difference quotients depend on the order given to the points $z_1,\dots,z_n$. The triangle 
formed in the table will be called triangle of hyperbolic difference quotients. 

\medskip

\noindent Fix a sequence of nodes $Z$ and $\eps>0$. We will say that a sequence $W$ satisfies the 
$\eps$-compatibility condition for $Z$ if for any $\{z_1,\dots,z_n\}\subset Z$ the terms of the corresponding triangle of hyperbolic difference quotients satisfy the inequality
\begin{equation}\label{ecc}
 \beta(\Delta^{k}_i,\Delta^{k}_j)\leq\eps\beta(z_i,z_j)\,\,\text{ for }k=0,\dots,n-2,\,\,i,j=k+1,\dots n.
\end{equation}
 
\noindent The constant $\eps$ will be called interpolation constant. Now we can give a definition of interpolating sequence, based on the triangle of hyperbolic 
difference quotients, analogous to the one given in \cite{MMN}.

\begin{defi}\label{def2}
 A sequence $Z$ of points in $\D$ is an interpolating sequence of order $n-1$ for $\U$ if there exists $\eps>0$, depending only on the sequence and $n$, such 
 that for any sequence $W\subset\D$ satisfying the $\eps$-compatibility condition \textit{(\ref{ecc})} there exists $f\in\U$ such that $f(z_j)=w_j$ for 
 $j\in\N$.
\end{defi}

\noindent As in \cite{MMN} this definition is conformally invariant, in the sense that if $Z$ is an interpolating sequence of order $n-1$ and $\tau$ is an 
automorphism of the disc then the sequence $\tau(Z)=\{\tau(z_j)\}$ is also an interpolating sequence of order $n-1$ with the same interpolation constant $\eps$. 
This is a consequence of the following property that may be found in \cite[Lemma 3.3]{Riv}: Let $f\in\U$, let $\phi$ and $\psi$ be two automorphisms of the disc 
and take pairwise distinct points $z_1,\dots,z_n$ in $\D$. Then for $1\leq j\leq n$ there exists $\theta\in [0,2\pi)$ such that 
$$\Delta^j(\psi\circ f\circ\phi)(z;z_1,\dots,z_j)=e^{i\theta}\Delta^j(f(\phi(z));\phi(z_1),\dots,\phi(z_j))\,\, z\in\D.$$ 
\noindent Besides, the definition coincides with the one given in \cite{MMN} when $n=2$. It is important to remark a direct consequence of the 
definition: if $Z$ is an interpolating sequence of order $k$ for $\U$, then it is also an interpolating sequence of order $j$ for $\U$ with $j=1,\dots,k-1$.

\medskip

\noindent The main result of this work is the characterization of these interpolating sequences.

\begin{thm}\label{main_th}
 A sequence $Z$ of distinct points in the unit disc is an interpolating sequence of order $n-1$ for $\U$ if and only if the following two conditions hold:
 \begin{itemize}
  \item[\textbf{(a)}] $Z=Z^{(1)}\cup\dots\cup Z^{(n)}$, where $Z^{(i)}$ is a separated sequence for $i=1,\dots,n$.
  \item[\textbf{(b)}] There exist constants $M>0$ and $0<\alpha<1$ such that for any Carleson square one has
   \begin{equation*}
    \#\left(Z\cap\left\{z\in Q\, :\, 2^{-m-1}\ell(Q)<1-|z|\leq2^{-m}\ell(Q)\right\}\right)\leq M2^{\alpha m}
   \end{equation*}
   for any $m=1,2,\dots$
 \end{itemize}
\end{thm}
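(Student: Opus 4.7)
The plan is to prove both directions along the lines of Theorem~\ref{th_MMN} of \cite{MMN}, replacing the first-order Schwarz--Pick estimate with the multi-point version of Theorem~\ref{SP_general}.

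For necessity, I argue \textbf{(a)} and \textbf{(b)} by contrapositive. The density condition \textbf{(b)} is obtained by adapting the test-sequence construction from \cite{MMN}: if a Carleson square $Q$ contains too many points of $Z$ in a single dyadic annulus, build a compatible target $W$ concentrated on that cluster and use $|f|\leq 1$ together with Schwarz--Pick control of $f$ inside $Q$ to extract the density estimate with some $\alpha<1$. For \textbf{(a)}, suppose $Z$ is not a union of $n$ separated sequences. Then for every $\eta>0$ there exist $n+1$ points of $Z$ within a common hyperbolic ball of radius $\eta$. Fix $\eta\ll\eps$, where $\eps$ is the interpolation constant of Definition~\ref{def2}, pick such a cluster $z_1,\dots,z_{n+1}$, and set $w_1=\cdots=w_n=0$ with $w_{n+1}$ free. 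A direct computation in the triangle applied to the $n$-point subset $\{z_1,\dots,z_{n-1},z_{n+1}\}$ shows that the last-column entry has magnitude comparable to $|w_{n+1}|/\eta^{n-2}$, so the $\eps$-compatibility forces $|w_{n+1}|\lesssim\eps\,\eta^{n-1}$. On the other hand, any interpolant $f\in\U$ must vanish at $z_1,\dots,z_n$ and hence factors through the degree-$n$ Blaschke product at these nodes, yielding $|w_{n+1}|\leq C\eta^n$. For $\eta\ll\eps$ one picks $|w_{n+1}|$ in the nonempty interval $(C\eta^n,\eps\eta^{n-1}/2)$, obtaining a compatible but non-interpolable target: a contradiction.

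For sufficiency, assume \textbf{(a)} and \textbf{(b)} and follow the $\bar\partial$-construction blueprint of \cite{MMN}. Decompose $Z=Z^{(1)}\cup\cdots\cup Z^{(n)}$ into separated pieces. Build a smooth extension $F\in C^\infty(\D)$ with $F(z_j)=w_j$ via a Whitney-type partition of unity adapted to the hyperbolic geometry of $\D$, arranged so that the Taylor jet of $F$ of order $n-1$ at each $z_j$ matches the full column of prescribed hyperbolic differences from the triangle. Verify that $(1-|z|)|\bar\partial F(z)|$ is the density of a Carleson measure of total norm $O(\eps)$, using \textbf{(b)} with exponent $\alpha<1$ to absorb the combinatorial accumulation across the $n$ pieces. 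Finally, solve $\bar\partial u=\bar\partial F$ via Jones' formula with $\|u\|_\infty=O(\eps)$ and set $f=F-u$; this $f$ is holomorphic, interpolates $W$ at $Z$, and lies in $\U$ once $\eps$ is chosen sufficiently small.

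The main technical obstacle lies in the Carleson-measure estimate for $\bar\partial F$. The smooth extension must be coordinated with the $n$-fold decomposition of $Z$, and one must verify that the higher-order hyperbolic differences --- controlled by $\eps$-compatibility on the full triangle --- produce precisely the Taylor-jet matching needed across interpiece overlaps so that $\bar\partial F$ is small at the right scale near each $z_j$. The density gain $\alpha<1$ in \textbf{(b)} absorbs the combinatorial accumulation of errors as in \cite{MMN}, but accommodating order-$(n-1)$ Taylor matching rather than order-$1$ matching makes the estimate substantially more delicate and is where the bulk of the technical work will lie.
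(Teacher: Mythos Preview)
Your necessity argument for \textbf{(a)} is the same idea as the paper's --- force $n+1$ points into a small hyperbolic disc, set $w_1=\cdots=w_n=0$, and choose $w_{n+1}$ so that compatibility holds but the Blaschke factorization of any interpolant gives a contradiction --- but you have skipped a step the paper treats carefully: Definition~\ref{def2} requires the $\eps$-compatibility condition on \emph{every} $n$-point subset of $\{z_1,\dots,z_{n+1}\}$ and, via the discussion after Remark~\ref{remark}, on every ordering of the points. Checking the one subset where $z_{n+1}$ comes last is not enough; when $z_{n+1}$ appears earlier in the ordering the triangle has a different structure. The paper closes this by using the Schur recursion (\ref{formula_NP}) with $g_0\equiv 0$ to build an explicit interpolant $g\in\U$ of the chosen $n$-point data with $|g|\lesssim\eps$, and then Lemma~\ref{lemma1} together with (\ref{estab}) controls all $\Delta^k_j$ uniformly over permutations. (Also, your exponent is off by one: with all $w_j=0$ for $j<n+1$ one gets $|\Delta^{n-1}_{n+1}|=|w_{n+1}|/\prod_j\rho(z_{n+1},z_j)\approx |w_{n+1}|/\eta^{n-1}$.) For \textbf{(b)} the paper takes a shortcut you miss: an interpolating sequence of order $n-1$ is a fortiori interpolating of order $1$, so Theorem~\ref{th_MMN} yields \textbf{(b)} immediately without rerunning the test-sequence construction.

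Your sufficiency plan is where the real divergence --- and the real gap --- lies. The paper does \emph{not} attempt a single $\bar\partial$-construction with order-$(n-1)$ jet matching. Instead it proceeds by induction on $n$: apply the MMN construction (Lemma~D) to the separated layer $Z^{(1)}$ to obtain $f_1\in\U$ with the crucial extra properties (\ref{prop1})--(\ref{prop2}), form auxiliary targets $\widetilde w^{(j)}_m=(f_1(z^{(j)}_m)-w^{(j)}_m)/(B_1E_1)(z^{(j)}_m)$ on $Z\setminus Z^{(1)}$, split these into two pieces and verify each satisfies the order-$(n-2)$ compatibility (this is the heart of the argument, using Lemma~\ref{lemma1} and the Schur solution of Remark~\ref{remark}), invoke the inductive hypothesis to get $\widetilde f\in\U$, and set $f=f_1-B_1E_1\widetilde f$. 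Two features of this scheme are absent from your sketch and are not minor. First, the outer function $E_1$ with $|E_1|\simeq 1-|f_1|$ is exactly what forces $|f|\le 1$ rather than merely $f\in H^\infty$; your step ``solve $\bar\partial u=\bar\partial F$ with $\|u\|_\infty=O(\eps)$ and set $f=F-u$'' gives a bounded holomorphic function but not one in $\U$, and you propose no mechanism to recover this. Second, your phrase ``the Taylor jet of $F$ of order $n-1$ at each $z_j$ matches the full column of prescribed hyperbolic differences'' is not well-posed: the $\Delta^k_j$ are divided differences depending on an ordered tuple $(z_1,\dots,z_k,z_j)$, not germs at a single point, so there is no Taylor jet at $z_j$ to match. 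What one actually needs near a cluster is a local holomorphic interpolant of the $n$ nearby data, and the paper's inductive peeling produces precisely this --- one separated layer at a time --- while never requiring more than first-order compatibility at each stage.
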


\noindent Observe that condition \textbf{(b)} in Theorem \ref{main_th}, that is, the density condition, remains the same as in Theorem \ref{th_MMN}, hence the 
only change is on the separation condition \textbf{(a)}, as it was expected. Furthermore, other previous results on generalizations of interpolation problems 
for Hardy spaces pointed out to this fact. In \cite{Vas2} and \cite{Vas1} the author generalizes the Carleson Theorem on interpolating sequences for $\Hi$. A 
suitable notion of hyperbolic difference quotient is given there. Then imposing the boundedness of the difference quotient up to a certain order leads to changing the separation condition in Carleson's 
result. Also in \cite{BNO} the authors generalize the interpolation problem for $\Hp$ solved previously in \cite{SS} (case $p\geq1$) and \cite{Kab} (case $0<p<1$). 
In that work, the trace space is defined by means of a certain maximal function using those hyperbolic difference quotients, and the separation condition changes in the same direction as in our result. Also these problems are studied in \cite{Har1} and \cite{Har2}.

\medskip

\noindent The proof of the necessity of the density condition \textbf{(b)} may be seen as a consequence of Theorem \ref{th_MMN}, and as we said above, the separation condition \textbf{(a)} will come from the definition of the trace space. For the sufficiency, we will split the sequence $Z$ in $n$ separated sequences $Z^{(1)},\dots,Z^{(n)}$, and 
use the solution of the interpolation problem constructed in \cite{MMN} for $Z^{(1)}$. This solution will have some extra properties that will let us define an 
auxiliary interpolation problem for $Z\setminus Z^{(1)}$. An inductive argument will lead us to the solution for the whole sequence $Z$.

\bigskip

\noindent Before starting with the proof, let us discuss the relation of our result here with the classical Nevanlinna-Pick problem. Theorem \ref{main_th}, as well as the one in \cite{MMN}, may be considered as a particular case of a general Nevanlinna-Pick interpolation problem. Actually, in \cite{BRW} the following version of the Nevanlinna-Pick Theorem was proved.

\begin{Talpha}\label{NP_BRW}
 Fix pairwise distinct interpolation points $z_1,\dots,z_n\in\D$ and the corresponding interpolation values $w_1,\dots,w_n\in\D$. Then problem (\ref{NP}) has 
 infinitely many solutions if and only if one (and then all) of the following conditions hold:
 \begin{enumerate}[\bf (i)]
  \item $|\Delta^{n-1}_n|<1$.
  \item $|\Delta^{k}_{k+1}|<1$ if $1\leq k\leq n-1$.
  \item $|\Delta^{k}_j|<1$ if $1\leq k<j\leq n$.
  \item $\beta(\Delta^{k-1}_j,\Delta^{k-1}_k)<\beta(z_j,z_k)$ if $1\leq k<j\leq n$.
  \item $\beta(\Delta^{k-1}_j,\Delta^{k-1}_i)<\beta(z_j,z_i)$ if $1\leq k<i<j\leq n$.
 \end{enumerate}
\end{Talpha}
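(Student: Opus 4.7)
I would organize the proof as a cycle of implications: the trivial restrictions (iii) $\Rightarrow$ (ii) $\Rightarrow$ (i); the bootstrap (i) $\Rightarrow$ (iii); the tautological equivalence (iii) $\Leftrightarrow$ (iv); the sufficiency (iii) $\Rightarrow$ (infinitely many solutions); and the necessity (infinitely many solutions) $\Rightarrow$ (v), from which (iv) drops out by the same argument. The link (iii) $\Leftrightarrow$ (iv) is a mere unpacking of the definition: since $\Delta^k_j = [\Delta^{k-1}_j,\Delta^{k-1}_k]/[z_j,z_k]$ and $\beta(\cdot,\cdot) = \log\frac{1+\rho}{1-\rho}$ is strictly increasing in $\rho = |[\cdot,\cdot]|$, the statement $|\Delta^k_j|<1$ is literally $\beta(\Delta^{k-1}_j,\Delta^{k-1}_k) < \beta(z_j,z_k)$.

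For the bootstrap (i) $\Rightarrow$ (iii) I would propagate the strict inequality $|\Delta^{n-1}_n|<1$ downward through the triangle using the M\"obius identity
$$1 - |[a,b]|^2 \;=\; \frac{(1-|a|^2)(1-|b|^2)}{|1-\bar{b}a|^2}.$$
Applied at the apex, $|[\Delta^{n-2}_n,\Delta^{n-2}_{n-1}]| < |[z_n,z_{n-1}]| < 1$ forces $(1-|\Delta^{n-2}_n|^2)(1-|\Delta^{n-2}_{n-1}|^2) > 0$, i.e.\ both values lie in $\D$ or both lie outside $\overline{\D}$. The ``outside'' alternative is ruled out at the base row, where $\Delta^0_j = w_j \in \D$ by hypothesis, and tracing the dichotomy back up along each edge of the tree eliminates it at every interior node. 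Iterating gives $|\Delta^k_j|<1$ for all $1 \leq k < j \leq n$, which is (iii).

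Sufficiency comes from the classical Schur algorithm. Since $|w_1|<1$, any $f \in \U$ with $f(z_1) = w_1$ has the form
$$f(z) \;=\; \frac{w_1 - [z,z_1]\,g(z)}{1 - \bar{w}_1 [z,z_1]\,g(z)}, \qquad g \in \U,$$
and this substitution translates the remaining conditions $f(z_j) = w_j$ into $g(z_j) = \Delta^1_j$. Iterating the reduction, using $|\Delta^{k-1}_k|<1$ from (iii) at each step to keep the new parameter in $\U$, after $n-1$ steps we are left with the single constraint $h(z_n) = \Delta^{n-1}_n$, and $|\Delta^{n-1}_n|<1$ ensures that this has infinitely many solutions $h\in \U$; unwinding the chain of substitutions produces infinitely many $f$. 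For the converse, given infinitely many interpolating solutions I would select one $f$ that is not a finite Blaschke product of degree $\leq n$ (which is possible because Blaschke products of bounded degree form a finite-dimensional family while the solution set is infinite); Theorem \ref{SP_general} then yields strict multi-point Schwarz-Pick at every level, and the stability identity (\ref{estab}) rewrites these strict inequalities as (v) and (iv).

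The main obstacle is the flooding step in (i) $\Rightarrow$ (iii): the apex dichotomy between the ``inside'' and ``outside'' branches is clean from the identity above, but one must carefully bookkeep how the boundary condition $w_j\in\D$ on the last row propagates upward through every internal node of the triangle in order to exclude the outside alternative everywhere. Once this propagation is organized, the Schur parametrization is routine algebra and the closing of the cycle via Theorem \ref{SP_general} is immediate.
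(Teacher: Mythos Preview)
The paper does not prove Theorem~\ref{NP_BRW}; it is quoted from \cite{BRW}, and only the Schur construction (formula~(\ref{formula_NP})) is reproduced, inside the proof of Remark~\ref{remark}. That fragment matches your sufficiency sketch (iii)$\Rightarrow$(solutions), and your use of Theorem~\ref{SP_general} together with (\ref{estab}) for (solutions)$\Rightarrow$(v) is also the intended route. On those parts there is nothing further to compare.

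The step you yourself flag as the main obstacle, (i)$\Rightarrow$(iii) via the M\"obius identity, is however not salvageable as written. If $[a,b]=(b-a)/(1-\bar b a)$ is read as an algebraic formula for arbitrary $a,b\in\C$, the base condition $w_j\in\D$ does \emph{not} propagate upward to rule out the ``both outside'' branch. Take $n=3$, $z_1=0$, $z_2=\tfrac12$, $z_3=-\tfrac12$ and $w_1=0$, $w_2=\tfrac35$, $w_3=-\tfrac35$; then $\Delta^1_2=\Delta^1_3=\tfrac65$, so $\Delta^2_3=0$ and (i) holds while (iii) fails---and indeed the problem has no solution, since the Schwarz Lemma forces $|f(\tfrac12)|\le\tfrac12<\tfrac35$. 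So your flooding argument is not merely missing bookkeeping; it is false. What makes the theorem correct is the convention, implicit in the paper's definition of $[\cdot,\cdot]$ as a map on $\D\times\D$, that $\Delta^{n-1}_n$ is only declared to exist when every entry used in its recursive construction already lies in $\D$. Under that reading (i) presupposes $|\Delta^k_j|<1$ for all $k\le n-2$, and (i)$\Rightarrow$(iii) becomes a tautology rather than a propagation argument; your M\"obius identity plays no role either way.
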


\noindent Comparing this result with Definition \ref{def2}, in our case we are imposing a more restrictive condition on the hyperbolic distance between any two 
hyperbolic difference quotients. In fact, the equivalence of \textbf{(iii)} and \textbf{(iv)}, which is a direct consequence of the monotonic dependence of 
$\beta$ on $\rho$, does not hold in general if we replace the natural Lipschitz constant 1 there by $\eps$. Nevertheless, it holds when the points 
$z_1,\dots,z_n$ are close. And this is the important case in the proof of Theorem \ref{main_th}.

\medskip

\noindent Finally we should also remark that a proof of \textbf{(ii)} in Theorem \ref{NP_BRW} may be essentially found also in \cite[Chapter X]{Wal}. Moreover, this book refers to a paper by Denjoy \cite{Den} where it was proved a condition for the infinite points case using the hyperbolic difference quotients: The problem (\ref{NP}) has infinitely many solutions if and only if
$$\sum_{n=1}^\infty\frac{1-|z_n|}{1-|\Delta^{n-1}_n|}<\infty.$$
 
\bigskip

\section{Proof of Theorem \ref{main_th}.}

\noindent We start by stating two results that will be basic in the proof. The first one is the following Lemma, which is a simple consequence of the 
Classical Schwarz-Pick Lemma. We will denote $D_h(z,\eta)$ the hyperbolic disc with center $z$ and radius $\eta$.

\begin{lma}\label{lemma1}
 Let $f$ be an analytic function in $D_h(z_0,\eta_1)\subseteq\D$ such that $|f(z)|\leq C$ on $D_h(z_0,\eta_1)$. Let $a\in D_h(z_0,\eta)$ with fixed $0<\eta<\eta_1$. 
 Then
 $$|f(z)-f(a)|\leq \widetilde{C}\rho(z,a),\,\,\text{ for }z\in D_h(z_0,\eta_1),$$
where $\widetilde{C}>0$ depends on $\eta_1-\eta$ and $C$.
\end{lma}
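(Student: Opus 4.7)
The plan is to reduce to the classical Schwarz lemma on $\D$ via a conformal change of variable centered at $a$, exploiting that $a$ lies strictly inside the hyperbolic disc on which $f$ is bounded, with a margin of exactly $\eta_1-\eta$.

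First I would note that, by the triangle inequality for the hyperbolic metric, for any $a\in D_h(z_0,\eta)$ one has $D_h(a,\eta_1-\eta)\subseteq D_h(z_0,\eta_1)$: indeed, if $\beta(z,a)<\eta_1-\eta$ and $\beta(a,z_0)<\eta$, then $\beta(z,z_0)<\eta_1$. Set $r':=\tanh\bigl((\eta_1-\eta)/2\bigr)$, the pseudohyperbolic radius corresponding to the hyperbolic radius $\eta_1-\eta$. The map
$$\phi_a(\zeta):=\frac{a-r'\zeta}{1-\overline{a}\,r'\zeta},\qquad \zeta\in\D,$$
is a conformal bijection from $\D$ onto $D_h(a,\eta_1-\eta)$ with $\phi_a(0)=a$, since it factors as the rescaling $\zeta\mapsto r'\zeta$ followed by the Möbius involution $w\mapsto(a-w)/(1-\overline{a}w)$. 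Consequently, the auxiliary function
$$H(\zeta):=f(\phi_a(\zeta))-f(a)$$
is analytic on $\D$, satisfies $H(0)=0$, and $|H(\zeta)|\leq 2C$. Applying the Schwarz lemma to $H/(2C)$ yields $|H(\zeta)|\leq 2C|\zeta|$ for every $\zeta\in\D$.

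Next I would translate this back. For $z\in D_h(a,\eta_1-\eta)$, the preimage $\zeta=\phi_a^{-1}(z)$ satisfies $|\zeta|=\rho(z,a)/r'$, and therefore
$$|f(z)-f(a)|=|H(\zeta)|\leq\frac{2C}{r'}\,\rho(z,a).$$
For the remaining points $z\in D_h(z_0,\eta_1)\setminus D_h(a,\eta_1-\eta)$ the trivial estimate $|f(z)-f(a)|\leq 2C$ combined with $\rho(z,a)\geq r'$ gives the same inequality. Combining both cases yields the statement with $\widetilde C=2C/\tanh\bigl((\eta_1-\eta)/2\bigr)$, depending only on $\eta_1-\eta$ and $C$ as claimed.

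No serious difficulty is expected; the one point that requires care is centering the conformal change of variable at $a$ rather than at $z_0$. Centering at $z_0$ would produce an estimate whose constant depends on $\tanh(\eta_1/2)-\tanh(\eta/2)$, a quantity not controlled by the gap $\eta_1-\eta$ alone, whereas centering at $a$ uses the full margin $\eta_1-\eta$ and delivers precisely the dependence advertised in the statement.
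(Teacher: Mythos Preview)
Your proof is correct and is precisely the standard Schwarz--Pick argument the paper has in mind; the paper itself offers no details beyond stating that the lemma ``is a simple consequence of the Classical Schwarz-Pick Lemma''. Your explicit constant $\widetilde C = 2C/\tanh\bigl((\eta_1-\eta)/2\bigr)$ and your remark about centering the conformal change at $a$ (rather than at $z_0$) to obtain dependence solely on $\eta_1-\eta$ are both apt and make the sketch complete.
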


\medskip

\noindent The second one is explained in the following remark.

\begin{obs}\label{remark}
 Let $Z=\{z_1,\dots,z_n\}$ be a sequence of distinct points in a small hyperbolic disc of radius $\eta>0$, and let $W=\{\Delta^0_1,\dots,\Delta^0_n\}$ be a set of values. Consider the corresponding hyperbolic difference quotients. If for a certain $\eps>0$ one has
 \begin{equation}\label{column_cond}
  \beta(\Delta^k_j,\Delta^k_{k+1})\leq\eps\beta(z_j,z_{k+1})
 \end{equation}
 \noindent for $1\leq k\leq n-2$ and $k+2\leq j\leq n-1$, then there exists $C>0$ depending only on $\rho$ such that 
 $$\beta(\Delta^k_i,\Delta^k_j)\leq C\eps\beta(z_i,z_j)$$
 for $k=1,\dots,n-2$ and $k+2\leq i<j\leq n-1$.
\end{obs}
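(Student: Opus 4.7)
The strategy is a downward induction on the column index $k$, running from $k=n-2$ (where the conclusion is vacuous, since $\{k+2,\dots,n-1\}$ has fewer than two elements) down to $k=1$. The engine of the induction is the very definition of the hyperbolic difference quotient,
\[
\Delta^{k+1}_j \;=\; \frac{[\Delta^k_j,\Delta^k_{k+1}]}{[z_j,z_{k+1}]},
\]
which, upon setting the automorphisms $\phi_k(w) := [w,\Delta^k_{k+1}]$ and $\psi_k(z) := [z,z_{k+1}]$ of $\D$, rewrites as $\phi_k(\Delta^k_j) = \Delta^{k+1}_j\cdot\psi_k(z_j)$.

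From hypothesis (\ref{column_cond}) at level $k$ I would first extract a \emph{size bound}. Using $\rho=\tanh(\beta/2)$, the elementary inequality $\tanh(\eps x)\le \eps x$ together with $\beta(z_j,z_{k+1})\le 2\eta$ turns the identity $|\Delta^{k+1}_j| = \rho(\Delta^k_j,\Delta^k_{k+1})/\rho(z_j,z_{k+1})$ into
\[
|\Delta^{k+1}_j| \;\le\; C_0\eps \qquad \text{for } j\in\{k+2,\dots,n-1\},
\]
with $C_0$ depending only on $\eta$. Next, the column condition at the next level $k+1$ (which controls the pairs $(k+2,j)$ with $j\ge k+3$), combined with the induction hypothesis applied to column $k+1$ (which controls the pairs inside $\{k+3,\dots,n-1\}$), yields the \emph{Lipschitz bound}
\[
\beta(\Delta^{k+1}_i,\Delta^{k+1}_j) \;\le\; C_{k+1}\eps\,\beta(z_i,z_j), \qquad k+2\le i<j\le n-1.
\]

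To pass from these two estimates at level $k+1$ to the desired estimate at level $k$, I would use automorphism invariance of $\beta$ to recast the target inequality $\beta(\Delta^k_i,\Delta^k_j)\le C_k\eps\,\beta(z_i,z_j)$ as $\beta(a_i,a_j)\le C_k\eps\,\beta(u_i,u_j)$ with $a_l:=\Delta^{k+1}_l\psi_k(z_l)$ and $u_l:=\psi_k(z_l)$. Because $|a_l|\le C_0\eps\tanh\eta$ is small and $|u_l|\le\tanh\eta$ is bounded away from $1$, the pseudohyperbolic and Euclidean distances are comparable at both scales, so the product-rule estimate
\[
|a_i-a_j| \;\le\; |\Delta^{k+1}_i|\,|u_i-u_j| + |u_j|\,|\Delta^{k+1}_i-\Delta^{k+1}_j|
\]
combines the size bound (first term) and the Lipschitz bound (second term) to produce $\rho(a_i,a_j)\le C_k\eps\,\rho(u_i,u_j)$, and converting back to $\beta$ finishes the inductive step.

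The main point requiring care is the propagation of the constants $C_k$ through the induction. The recursion has the schematic form $C_k\lesssim C_0 + (\text{abs.\ const})\cdot\tanh\eta\cdot C_{k+1}$, so choosing $\eta$ small enough that $(\text{abs.\ const})\cdot\tanh\eta<1$ (which is precisely what the ``small hyperbolic disc of radius $\eta$'' hypothesis secures) closes the recursion to a uniform bound $C$ depending only on $\eta$, as the statement requires. A subsidiary annoyance is making sure that, at each inductive step, the boundary case $i=k+2$ is folded into the induction hypothesis (which only addresses $i,j\ge k+3$); this is handled exactly by invoking the column condition at level $k+1$.
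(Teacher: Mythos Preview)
Your approach is correct but genuinely different from the paper's. The paper does not induct on the column index. Instead, it observes that the hypothesis implies $|\Delta^k_j|\le\eps$ for all relevant $k,j$, so by Theorem~\ref{NP_BRW} the finite Nevanlinna--Pick problem for $Z$ and $W$ is solvable; it then runs the Schur algorithm (formula~(\ref{formula_NP})) starting from $g_0\equiv 0$ to produce a specific interpolant $g\in\U$ whose intermediate functions $g_{n-k}$ satisfy $|g_{n-k}|\le C\eps$ on all of $\D$. Since by (\ref{estab}) one has $\Delta^k_j=g_{n-k}(z_j)$, Lemma~\ref{lemma1} applied to $g_{n-k}$ immediately gives $|\Delta^k_j-\Delta^k_i|\le C\eps\,\rho(z_j,z_i)$ in one stroke for every $k$.

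What each approach buys: the paper's argument is structurally cleaner---it reduces the estimate to a single application of the Schwarz--Pick mechanism (Lemma~\ref{lemma1}) once a good interpolant is in hand---and this same Schur-plus-Lemma~\ref{lemma1} pattern is reused verbatim later in the sufficiency proof. Your argument is more self-contained and elementary: it avoids invoking Theorem~\ref{NP_BRW} and the Schur recursion altogether, working directly with the divided-difference identity and a product-rule estimate. One minor point: your concern about making the recursion $C_k\lesssim C_0+(\text{const})\tanh\eta\,C_{k+1}$ contract by taking $\eta$ small is unnecessary, since the induction has only $n-2$ steps; the resulting constant depends on $n$ as well as $\eta$, which is also what the paper's proof produces (the statement's ``depending only on $\rho$'' is loose in this respect).
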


\begin{proof}
 We will follow the ideas of the proof of Theorem \ref{NP_BRW}. Consider the following triangle of hyperbolic difference quotients (we will refer to the last 
 row later):
 \begin{displaymath}
 \begin{array}{cc|lllllc|}
  \cline{3-8} 
  & & \multicolumn{6}{c|}{\text{hyperbolic difference quotients}}\\
  \hline
  \multicolumn{1}{|c|}{Z} & \multicolumn{1}{|c|}{W} & 1 & 2 & \cdots & n-2 & n-1 & n \\
  \hline 
  \multicolumn{1}{|c|}{\vspace{0cm}} & \multicolumn{1}{|c|}{\vspace{0cm}} & & & & & & \\
  \multicolumn{1}{|c|}{z_1} & \multicolumn{1}{|c|}{\Delta^0_1} & & & & & & \\ 
  \multicolumn{1}{|c|}{\vspace{0cm}} & \multicolumn{1}{|c|}{\vspace{0cm}} & & & & & & \\
  \multicolumn{1}{|c|}{z_2} & \multicolumn{1}{|c|}{\Delta^0_2} & \Delta^1_2 & & & & & \\
  \multicolumn{1}{|c|}{\vspace{0cm}} & \multicolumn{1}{|c|}{\vspace{0cm}} & & & & & & \\
  \multicolumn{1}{|c|}{z_2} & \multicolumn{1}{|c|}{\Delta^0_3} & \Delta^1_3 & \Delta^2_3 & & & & \\
  \multicolumn{1}{|c|}{\vspace{0cm}} & \multicolumn{1}{|c|}{\vspace{0cm}} & & & & & & \\
  \multicolumn{1}{|c|}{\vdots} & \multicolumn{1}{|c|}{\vdots} & \multicolumn{1}{c}{\vdots} & \multicolumn{1}{c}{\vdots} & \ddots & & & \\
  \multicolumn{1}{|c|}{\vspace{0cm}} & \multicolumn{1}{|c|}{\vspace{0cm}} & & & & & & \\
  \multicolumn{1}{|c|}{z_{n-1}} & \multicolumn{1}{|c|}{\Delta^0_{n-1}} & \Delta^1_{n-1} & \Delta^2_{n-1} & \cdots & \Delta^{n-2}_{n-1} & & \\
  \multicolumn{1}{|c|}{\vspace{0cm}} & \multicolumn{1}{|c|}{\vspace{0cm}} & & & & & & \\
  \multicolumn{1}{|c|}{z_{n}} & \multicolumn{1}{|c|}{\Delta^0_n} & \Delta^1_{n} & \Delta^2_{n} & \cdots & \Delta^{n-2}_{n} & \Delta^{n-1}_{n} & \\
  \multicolumn{1}{|c|}{\vspace{0cm}} & \multicolumn{1}{|c|}{\vspace{0cm}} & & & & & & \\
  \multicolumn{1}{|c|}{z} & \multicolumn{1}{|c|}{g(z)} & \Delta^1g(z) & \Delta^2g(z) & \cdots & \Delta^{n-2}g(z) & \Delta^{n-1}g(z) & \Delta^{n}g(z)\\
  \multicolumn{1}{|c|}{\vspace{0cm}} & \multicolumn{1}{|c|}{\vspace{0cm}} & & & & & & \\
  \hline
 \end{array}
\end{displaymath}

\medskip

\noindent Since the points are in a small hyperbolic disc $D_h$, we can consider the pseudohyperbolic distance $\rho$ instead of $\beta$. Hence 
(\ref{column_cond}) is equivalent to
$$|\Delta^k_j|\leq\eps\,\,\text{ for }k=1,\dots,n-1\,\text{ and }j=k+1,\dots,n.$$
\noindent In particular, all of them are smaller than 1, so by Theorem \ref{NP_BRW} there are infinitely many solutions of the interpolation problem 
(\ref{NP}) for the sequence of $n$ points $Z$. Actually, in the proof appearing in \cite{BRW} we can find the following version of the Schur's algorithm to construct these 
solutions (see the cited paper for the details). The last row of the table, where $$\Delta^kg(z):=\Delta^kg(z;z_1,\dots,z_k)$$ 
for $k=1,\dots,n$, has been added in order to describe this process. Take an arbitrary function $g_0(z)\in\U$ and apply the recursive formula
\begin{equation}\label{formula_NP}
 g_k(z)=\left[[z,z_{n+1-k}]\cdot g_{k-1}(z),\Delta^{n-k}_{n+1-k}\right],\,\,\text{for }k=1,\dots,n
\end{equation}
\noindent to obtain a solution of the problem for $Z$. Denoting $g:=g_n\in\U$, we easily see that this is a solution, since $g_n(z_j)=\Delta^0_j$ for 
$j=1,\dots n$. Moreover, the functions of the algorithm satisfy 
$$g_{n-k}(z)=\Delta^kg(z)\,\,\text{ for }k=0,\dots,n.$$
\noindent In particular if we choose $g_0(z)=0$ then it is easy to see that there exists $C>0$ depending only on $n$ such that $|g_k(z)|\leq C\eps$ for 
$k=1,\dots,n-1$. Now property (\ref{estab}) and Lemma \ref{lemma1} applied in the whole disc $\D$ show that for  $1\leq k\leq n-2$ 
$$|\Delta^k_j-\Delta^k_i|=|g_{n-k}(z_j)-g_{n-k}(z_i)|\leq C\eps\rho(z_j,z_i)$$
for $k+1\leq i<j\leq n$. This finishes the proof.
\end{proof}

\noindent Observe that Remark \ref{remark} is stated for a fixed ordering of the $n$ points and $k\geq 1$. The case $k=0$ holds if we suppose that 
the $\eps$-compatibility condition is true for any permutation of the points. Hence, in order to check inequality (\ref{ecc}) for $k=0,\dots,n-2$ when the 
$n$ points are in a small hyperbolic disc we just need to see that 
\begin{equation}\label{ecc2}
 |\Delta^k_j|\leq\eps\,\,\text{ for }k=1,\dots,n-1\,\text{ and }j=k+1,\dots,n,
\end{equation}
\noindent and we need to check it for any permutation of the points.

\medskip
 
\subsection{Necessity}

\noindent Let $Z\subset\D$ be an interpolating sequence of order $n-1$ for $\U$. We need to show that there exists $\eta>0$ such that there cannot be more 
than $n$ points inside a small hyperbolic disc $D_h$ of radius $\eta$. To this end, take $z_1,z_2,\dots,z_n,z_{n+1}\in Z$ such that $z_j\in\D_h(z_1,\eta)$ 
for $j=2,\dots,n+1$. Suppose also that 
$$\min\rho(z_1,z_j)=\rho(z_1,z_{n+1}),\,\,\text{ where }j=2,\dots,n+1.$$
\noindent Now we can define the following values: $w_j=0$ for $j=1,\dots,n$, and $w_{n+1}=\eps x$, where $x$ will be determined so that the 
sequence $W=\{w_1,\dots,w_{n+1}\}$ satisfies the $\eps$-compatibility condition. To this end, we have to take the set $Z^j=\{z_1,\dots,z_{n+1}\}\setminus\{z_j\}$, 
for $j=1,\dots,n+1$ and check inequality (\ref{ecc2}) for the terms in the corresponding triangle of hyperbolic difference quotients for any permutation of 
the points of $Z^j$. We will see that it is enough to check it only for a certain permutation, with a suitable choice of the value $x$.

\medskip

\noindent The inequality trivially holds for the set $Z^{n+1}$, since all the hyperbolic differences vanish. For the sets $Z^j$ with $j=1,\dots,n$, we will first 
consider the original ordination of the points, and then we will consider any permutation of them. For the sake of simplicity, we will do it only in the case of 
$Z^1$, but the argument is the same for all other sets. Consider the following triangle of hyperbolic difference quotients.

\begin{displaymath}
 \begin{array}{cc|lllll|}
  \cline{3-7} 
  & & \multicolumn{5}{c|}{\text{hyperbolic difference quotients}}\\
  \hline
  \multicolumn{1}{|c|}{Z} & \multicolumn{1}{|c|}{W} & 1 & 2 & \cdots & n-2 & n-1 \\
  \hline 
  \multicolumn{1}{|c|}{\vspace{0cm}} & \multicolumn{1}{|c|}{\vspace{0cm}} & & & & & \\
  \multicolumn{1}{|c|}{z_2} & \multicolumn{1}{|c|}{0} & & & & & \\ 
  \multicolumn{1}{|c|}{\vspace{0cm}} & \multicolumn{1}{|c|}{\vspace{0cm}} & & & & & \\
  \multicolumn{1}{|c|}{z_3} & \multicolumn{1}{|c|}{0} & \Delta^1_3 & & & & \\
  \multicolumn{1}{|c|}{\vspace{0cm}} & \multicolumn{1}{|c|}{\vspace{0cm}} & & & & & \\
  \multicolumn{1}{|c|}{z_4} & \multicolumn{1}{|c|}{0} & \Delta^1_4 & \Delta^2_4 & & & \\
  \multicolumn{1}{|c|}{\vspace{0cm}} & \multicolumn{1}{|c|}{\vspace{0cm}} & & & & & \\
  \multicolumn{1}{|c|}{\vdots} & \multicolumn{1}{|c|}{\vdots} & \multicolumn{1}{c}{\vdots} & \multicolumn{1}{c}{\vdots} & \ddots & & \\
  \multicolumn{1}{|c|}{\vspace{0cm}} & \multicolumn{1}{|c|}{\vspace{0cm}} & & & & & \\
  \multicolumn{1}{|c|}{z_{n}} & \multicolumn{1}{|c|}{0} & \Delta^1_{n} & \Delta^2_{n} & \cdots & \Delta^{n-2}_{n} & \\
  \multicolumn{1}{|c|}{\vspace{0cm}} & \multicolumn{1}{|c|}{\vspace{0cm}} & & & & & \\
  \multicolumn{1}{|c|}{z_{n+1}} & \multicolumn{1}{|c|}{\eps x} & \Delta^1_{n+1} & \Delta^2_{n+1} & \cdots & \Delta^{n-2}_{n+1} & \Delta^{n-1}_{n+1} \\
  \multicolumn{1}{|c|}{\vspace{0cm}} & \multicolumn{1}{|c|}{\vspace{0cm}} & & & & & \\
  \hline
 \end{array}
\end{displaymath}

\medskip

\noindent It is easy to see that the only hyperbolic difference quotients that do not vanish in general are the ones in the row corresponding to the point 
$z_{n+1}$, that is, the ones of the form $\Delta^k_{n+1}$ for $k=1,\dots,n-1$, and moreover,
$$\Delta^k_{n+1}=\frac{\Delta^{k-1}_{n+1}}{[z_{n+1},z_{k+1}]},$$
\noindent where $\Delta^0_{n+1}=\eps x$. Consequently, (\ref{ecc2}) is true in this case if
$$|x|\leq\prod_{i=2}^{n}\rho(z_i,z_{n+1}),$$
Without loss of generality, we may suppose that
$$\max\rho(z_j,z_{n+1})=\rho(z_n,z_{n+1}),\,\,\text{ for }j=1,\dots,n.$$
Then, we can define 
$$x=C\cdot\displaystyle\prod_{j=1}^{n-1}[z_{n+1},z_j],$$ 
with $0<C<1$. Hence, $|\Delta^k_j|\leq C\eps$ for $k=1,\dots,n-1$ and $j=k+2,\dots,n+1$.

\medskip

\noindent Next, we are going to see that for a suitable choice of $C$ inequality (\ref{ecc2}) holds for the terms of the triangle corresponding to any 
permutation of the points. To this end, we will proceed as in the proof of Remark \ref{remark}. Let $g_0(z)=0$, since all the terms in the upper diagonal but the last one vanish, the recursive formula (\ref{formula_NP}) produces the following solution of the interpolation problem for $Z^{1}$:
$$g(z)=(-1)^{n+1}\Delta^{n-1}_{n+1}\prod_{j=2}^n[z,z_{j}],$$
which easily satisfies $|g(z)|\leq\eps$. Hence, choosing $C$ small enough, Lemma \ref{lemma1} and property (\ref{estab}) show that for any permutation 
of the points inequality (\ref{ecc2}) holds. The constant $C$ does not depend on the points, so the choice of $x$ is the same for any subsequence $Z^j$, 
for $j=1,\dots,n$. 

\medskip 

\noindent Since $Z$ is an interpolating sequence of order $n-1$ there is a function $f\in\U$ such that $f(z_j)=w_j$ for $j=1,\dots n+1$. Besides, the function 
has the form
$$f(z)=\prod_{j=1}^{n}[z,z_j]h(z),$$
where $h\in\U$. Now evaluating the function at $z_{n+1}$ we see that
$$\eps\prod_{j=1}^{n-1}\rho(z_j,z_{n+1})=|f(z_{n+1})|\leq\prod_{j=1}^n\rho(z_j,z_{n+1}).$$
Hence, there exists $C>0$ such that $\eta>C\eps$, which tells that there cannot be more than $n$ points inside a small hyperbolic disc. So \textbf{(a)} holds.

\bigskip 

\noindent The necessity of condition \textbf{{(b)}} is a consequence of Theorem \ref{th_MMN}, since in particular each separated sequence $Z^{(j)}$ is an 
interpolating sequence of order 1.

\bigskip

\subsection{Sufficiency.}

\noindent Consider a sequence $Z$  of points in $\D$ satisfying conditions \textbf{(a)} and \textbf{(b)} of Theorem \ref{main_th}. Assume that there exist 
$\eps>0$ (that will be fixed later) and also a sequence of values $W\subset\D$ which satisfies the $\eps$-compatibility condition. We 
take first the separated sequence $Z^{(1)}$ and its corresponding values $W^{(1)}$. Condition \textbf{(b)} and the $\eps$-compatibility condition 
for first differences allows us to construct a function $f_1\in\U$ that interpolates $W^{(1)}$ at $Z^{(1)}$. Furthermore, this function has two additional properties. In 
order to state them, let $E_1(z)$ be the outer function with boundary values $1-|f_1(e^{i\theta})|$, that is,
$$E_1(z)=\exp\left(\frac{1}{2\pi}\int_0^{2\pi}\frac{e^{i\theta}+z}{e^{i\theta}-z}\log{\left(1-|f_1(e^{i\theta})|\right)}d\theta\right).$$
\noindent The construction of $f_1$ is explained in detail in \cite[Section 4]{MMN} and its properties are stated in the following Lemma. 

\begin{Lalpha}
 Let $Z^{(1)}=\{z^{(1)}_i\}$ be a separated sequence in $\D$ with interpolation constant $\eps>0$. Let $W^{(1)}=\{w^{(1)}_i\}$ be a sequence in $\D$ such that 
 the compatibility condition (\ref{cc1}) holds. Then there exists $f_1\in\U$ such that $f_1(z^{(1)}_i)=w^{(1)}_i$ for $i\in\N$. Moreover, there exists a 
 constant $C>0$ such that 
 \begin{equation}\label{prop1}
  |E_1(z)|\geq C(1-|f_1(z)|).
 \end{equation}
 for every $z\in\D$. Besides, there exists $\eta_1>0$ depending on $Z^{(1)}$ such that for $z\in D_h(z^{(1)}_i,\eta_1)$ one has that
 \begin{equation}\label{prop2}
  \beta(f_1(z),f_1(z^{(1)}_i))\leq C\eps\beta(z,z^{(1)}_i).
 \end{equation}
\end{Lalpha}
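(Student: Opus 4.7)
Since the claim is essentially a refined version of Theorem~\ref{th_MMN} specialized to the separated subsequence $Z^{(1)}$, my plan is to invoke the explicit construction in \cite[Section 4]{MMN} and extract the additional regularity properties (\ref{prop1}) and (\ref{prop2}) from that construction itself. The preliminary observation is that $Z^{(1)}\subset Z$ inherits the density condition \textbf{(b)}, so the hypotheses of Theorem~\ref{th_MMN} are met and a solution $f_1\in\U$ exists. The MMN construction is concrete: one prescribes the modulus $|f_1(e^{i\theta})|$ on $\partial\D$ piece by piece, assigning values close to $|w^{(1)}_i|$ over the projection onto $\partial\D$ of a Carleson square above each $z^{(1)}_i$ at the scale dictated by the separation and \textbf{(b)}, and then corrects the argument of $f_1$ by a Blaschke/corona--type argument so that $f_1(z^{(1)}_i)=w^{(1)}_i$.

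To establish (\ref{prop1}), subharmonicity of $|f_1|$ gives $1-|f_1(z)|\geq P[1-|f_1|](z)$, while Jensen's inequality gives $P[1-|f_1|](z)\geq |E_1(z)|$, so the inequality $1-|f_1(z)|\geq |E_1(z)|$ is automatic. The content of (\ref{prop1}) is the reverse comparison, which must be extracted from the construction: in \cite{MMN}, $f_1$ is built so that $1-|f_1(e^{i\theta})|$ is essentially constant on the boundary arcs that appear at hyperbolic scale, hence $\log(1-|f_1(e^{i\theta})|)$ has bounded oscillation on those arcs. For such a weight, the exponential of the Poisson integral of its logarithm is comparable to the Poisson integral itself, and in turn $1-|f_1(z)|$ is comparable to that Poisson integral at any $z\in\D$ (again by the construction). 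Chaining these comparisons yields $|E_1(z)|\geq C(1-|f_1(z)|)$.

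For (\ref{prop2}), we combine the compatibility hypothesis (\ref{cc1}) with a Schwarz--Pick argument. The hyperbolic difference quotient $\Delta f_1(\cdot;z^{(1)}_i)$ belongs to $\U$ by Theorem~\ref{SP_general}, and (\ref{cc1}) yields $|\Delta f_1(z^{(1)}_j;z^{(1)}_i)|\leq\eps$ for every $j\neq i$. Because $Z^{(1)}$ is separated, the test points $z^{(1)}_j$ are suitably spread throughout $\D$ at hyperbolic scale, and one propagates the small-value bound on the sequence to a bound on a hyperbolic neighborhood of $z^{(1)}_i$ via Lemma~\ref{lemma1} and the Schwarz--Pick Lemma, obtaining $|\Delta f_1(z;z^{(1)}_i)|\leq C\eps$ on $D_h(z^{(1)}_i,\eta_1)$ for a suitable $\eta_1$ depending only on the separation constant. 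Converting from pseudohyperbolic to hyperbolic distance gives (\ref{prop2}). The main obstacle is (\ref{prop1}): it demands delicate engineering of the boundary modulus of $f_1$ during the construction and cannot be obtained from Theorem~\ref{th_MMN} alone; this is the central technical content of \cite[Section 4]{MMN} that the present lemma directly borrows.
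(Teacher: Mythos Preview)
The paper does not prove this lemma; it simply states it and refers the reader to \cite[Section~4]{MMN} for both the construction of $f_1$ and the two extra properties. Your overall plan---to invoke that construction and note that (\ref{prop1}) and (\ref{prop2}) are built into it---is therefore exactly what the paper does, and your discussion of (\ref{prop1}) correctly identifies that it is a feature of how the boundary modulus of $f_1$ is engineered and cannot be recovered a posteriori from the interpolation property alone.

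Your sketch for (\ref{prop2}), however, contains a genuine gap. First, the assertion that (\ref{cc1}) gives $|\Delta f_1(z^{(1)}_j;z^{(1)}_i)|\leq\eps$ for every $j\neq i$ is not correct: (\ref{cc1}) bounds the ratio of \emph{hyperbolic} distances, and when $\beta(z^{(1)}_j,z^{(1)}_i)$ is large the corresponding pseudohyperbolic ratio $\rho(w^{(1)}_j,w^{(1)}_i)/\rho(z^{(1)}_j,z^{(1)}_i)$ can be arbitrarily close to $1$ even for small $\eps$. Second, and more fundamentally, even granting a uniform bound $|\Delta f_1(z^{(1)}_j;z^{(1)}_i)|\leq\eps$ along the separated sequence, there is no mechanism to ``propagate'' this to a hyperbolic neighborhood of $z^{(1)}_i$: a function in $\U$ can be tiny on a separated sequence and close to $1$ elsewhere (take the Blaschke product with zeros on the sequence). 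Lemma~\ref{lemma1} needs a bound on a full hyperbolic disc as input, not values on a discrete set, and Schwarz--Pick only gives the trivial bound $|\Delta f_1|\leq 1$. Property (\ref{prop2}) is, like (\ref{prop1}), obtained in \cite{MMN} directly from the explicit form of $f_1$---essentially by controlling $(1-|z|^2)|f_1'(z)|/(1-|f_1(z)|^2)$ near each node during the construction---and not by any argument that starts from the interpolated values alone.
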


\medskip

\noindent Let $Z^{(j)}=\{z^{(j)}_i\}$ for $j=2,\dots,n$. As in \cite{MMN}, we can take $\eta>0$ to be smaller than the separation constant of the 
sequence $Z^{(1)}$ and also assume that $\eta<\eta_1$, where $\eta_1$ is the radius appearing in (\ref{prop2}). Then we can suppose that 
$$Z^{(j)}\subset\bigcup_{i\in\N} D_h(z^{(1)}_i,\eta)\,\,\text{ for every }j=2,\dots,n.$$
This means that for each $z^{(j)}_m\in Z^{(j)}$ there exists $z^{(1)}_{i(m)}\in Z^{(1)}$ such that $z^{(j)}_m\in D_h(z^{(1)}_{i(m)},\eta)$. Now we will state 
and solve an auxiliary interpolation problem for $Z\setminus Z^{(1)}$ as follows. 

\medskip

\noindent Let $B_1(z)$ be the Blaschke product with zeros $Z^{(1)}$, and for $j=2,\dots,n$ let
$$\widetilde{w}^{(j)}_m=\frac{f_1(z^{(j)}_m)-w^{(j)}_m}{B_1(z^{(j)}_m)\cdot E_1(z^{(j)}_m)}.$$

\noindent These auxiliary values were also defined in \cite{MMN}. Hence we can address the interpolation problem (\ref{NP}) for 
$Z\setminus Z^{(1)}$ and $\widetilde{W}=\cup\widetilde{W}^{(j)}$, with $\widetilde{W}^{(j)}=\{\widetilde{w}^{(j)}_m\}$ for each $j=2,\dots,n$. If there exists 
a solution $\widetilde{f}\in\U$ such that $\widetilde{f}(z^{(j)}_m)=w^{(j)}_m$ for $j=2,\dots,n$ and $m\in\N$ then the function
\begin{equation}\label{solution}
 f(z)=f_1(z)-B_1(z)\cdot E_1(z)\cdot \widetilde{f}(z)
\end{equation}
\noindent is in $\U$ and interpolates the values $W$ in the nodes $Z$. 

\bigskip

\noindent We will proceed by induction in $n$. The case $n=2$ is actually the proof of the sufficiency of Theorem \ref{th_MMN}. So let us suppose that Theorem \ref{main_th} holds for $n-1$ and prove it 
for $n$.

\medskip

\noindent In order to make the proof easier, we will split the auxiliary problem into two different interpolation problems, since

$$\widetilde{w}^{(j)}_m=\frac{f_1(z^{(j)}_m)-f_1(z^{(1)}_{i(m)})}{B_1(z^{(j)}_m)\cdot E_1(z^{(j)}_m)}+\frac{w^{(1)}_{i(m)}-w^{(j)}_m}{B_1(z^{(j)}_m)\cdot E_1(z^{(j)}_m)}.$$

\medskip

\noindent Consider first the interpolation problem defined by the values

\begin{equation*}
 \hdel^{(j)}_m=2\cdot\frac{f_1(z^{(j)}_m)-f_1(z^{(1)}_{i(m)})}{B_1(z^{(j)}_m)\cdot E_1(z^{(j)}_m)},
\end{equation*}

\noindent corresponding to $z^{(j)}_m$ for $j=2,\dots,n$ and $m\in\N$. Observe that properties (\ref{prop1}) and (\ref{prop2}) imply that 
$|\hdel^{(j)}_m|\lesssim\eps$. Now we want to apply the hypothesis of induction to get the existence of the function $h\in\U$ such that $h(z^{(j)}_m)=\hdel^{(j)}_m$ 
for $j=2,\dots,n$ and $m\in\N$. In order to do this, we will choose $n-1$ points of the sequence $Z\setminus Z^{(1)}$ and check the $\eps$-compatibility condition 
for the corresponding triangle of hyperbolic difference quotients. 

\noindent To this aim we have basically two different cases: when the points are far and when they are close, that is, when the distance between the points is bigger than a fixed constant, or when it is smaller. Let us start by the first case, and in order to simplify the notation, take first $z_2,\dots,z_n\in Z\setminus Z^{(1)}$ such that $\beta(z_i,z_j)\geq\eta$ for $2\leq i<j\leq n$. Denote the corresponding images by 
$\hdel^0_j$ for $j=2,\dots,n$. Then the triangle of hyperbolic difference quotients is 
\begin{displaymath}
 \begin{array}{|c|c|cccc|}
  \hline
  Z & W & 1 & 2 & \cdots & n-2 \\
  \hline
  \vspace{0cm} & & & & & \\
  z_2 & \hdel^0_2 & & & & \\
  \vspace{0cm} & & & & & \\
  z_3 & \hdel^0_3 & \hdel^1_3 & & & \\
  \vspace{0cm} & & & & & \\
  z_4 & \hdel^0_4 & \hdel^1_4 & \hdel^2_4 & & \\
  \vspace{0cm} & & & & & \\
  \vdots & \vdots & \vdots & \vdots & \ddots & \\
  \vspace{0cm} & & & & & \\
  z_n & \hdel^0_n & \hdel^1_n & \hdel^2_n & \cdots & \hdel^{n-2}_n \\
  \vspace{0cm} & & & & & \\
  \hline
 \end{array}
\end{displaymath}

\noindent where
$$\hdel^k_j=\frac{\left[\hdel^{k-1}_j,\hdel^{k-1}_{k+1}\right]}{[z_j,z_{k+1}]}\,\,\text{ for }k=1,\dots,n-2\,\text{ and }j=k+2,\dots,n.$$

\noindent Since $|\hdel^0_j|\lesssim\eps$, if $\eps$ is small enough one may see that $\beta(\hdel^k_j,\hdel^k_i)\leq C\eps$, so then there exists 
$C'=C'(\eta)>0$ such that 
$$\beta(\hdel^k_j,\hdel^k_i)\leq C'\eps\beta(z_j,z_i)\,\,\text{ for }0\leq k\leq n-2,\text{ and }k+2\leq i<j\leq n.$$

\medskip 

\noindent Hence we may focus on the case of close points. We can consider the following situation: Take $z_1\in Z^{(1)}$ so that for each $j=2,\dots,n$ there 
exists a unique $z_j\in Z^{(j)}\cap D_h(z_1,\eta)$. Denote $\Omega_1=D_h(z_1,\eta_1)$, so then $\rho(z_j,\partial\Omega_1)\geq \eta_1-\eta>0$, that is, $z_j$ 
are further than a fixed distance from $\partial\Omega_1$. Then the corresponding images are
$$\hdel^0_j=2\cdot\frac{f_1(z_j)-f_1(z_1)}{B_1(z_j)E_1(z_j)}\,\,\text{ for}j=2,\dots,n.$$
\noindent We want to see that $|\hdel^k_j|\leq C \eps$, with $C$ depending only on the sequence $\{z_2,\dots,z_n\}$. Consider the function
$$f_2(z)=2\frac{f_1(z)-f_1(z_1)}{B_1(z)E_1(z)},$$
which by (\ref{prop1}) and (\ref{prop2}) is analytic and $|f_2(z)|\lesssim\eps$ in $\Omega_1$. Moreover, $f_2(z_j)=\hdel^0_j$ for $j=2,\dots,n$. Observe that 
Lemma \ref{lemma1} implies that $|f_2(z_2)-f_2(z)|\lesssim\eps\rho(z,z_2)$ for $z\in\Omega_1$. Now, the function
$$f_3(z):=\frac{[f_2(z),f_2(z_2)]}{[z,z_2]}$$
is again analytic in $\Omega_1$ and $|f_3(z)|\lesssim\eps$. Moreover, $f_3(z_j)=\hdel^1_j$ for $j=3,\dots,n$. Inductively, consider the function
$$f_{k+2}(z)=\frac{[f_{k+1}(z),f_{k+1}(z_{k+1})]}{[z,z_{k+1}]},$$
which is analytic in $\Omega_1$, bounded by a constant comparable to $\eps$ and $f_{k+2}(z_j)=\hdel^{k}_j$ for $j=k+2,\dots,n$. Then we may apply Lemma 
\ref{lemma1} to conclude that
$$|\hdel^k_{k+2}-\hdel^k_j|\lesssim\eps\rho(z_{k+2},z_j)\,\,\text{ for }j=k+3,\dots,n.$$

\medskip

\noindent Hence, choosing $\eps$ small enough, (\ref{ecc2}) holds for $n-1$ points. Clearly it also holds for any permutation of them. We can consequently 
apply Theorem \ref{main_th} to assert that there exists $h\in\U$ such that $h(z^{(j)}_m)=\hdel^{(j)}_m$ for $j=2,\dots,n$ and $m\in\N$.

\bigskip

\noindent Consider now the second interpolation problem defined by the values

\begin{equation*}
 \tdel^{(j)}_m=2\cdot\frac{w^{(j)}_m-w^{(1)}_{i(m)}}{B_1(z^{(j)}_m)\cdot E_1(z^{(j)}_m)}.
\end{equation*}

\noindent Observe that (\ref{cc1}) and (\ref{prop1}) imply that $|\tdel^{(j)}_m|\lesssim\eps$. As in the previous problem, we can consider two different cases. 
If $\beta(z^{(j)}_m,z^{(i)}_l)\geq\eta$, arguing as above we can see that for $\eps$ small enough condition (\ref{ecc}) holds. Hence, we may focus again on the 
domain $\Omega_1$. Observe the following tables of hyperbolic difference quotients

\begin{multicols}{2}
 \begin{displaymath}
 \begin{array}{|c|c|cccc|}
  \hline
  \vspace{0cm} & & & & &\\
  z_1 & \Delta^0_1 & & & &\\
  \vspace{0cm} & & & & &\\
  z_2 & \Delta^0_2 & \Delta^1_2 & & & \\
  \vspace{0cm} & & & & & \\
  z_3 & \Delta^0_3 & \Delta^1_3 & \Delta^2_3 & & \\
  \vspace{0cm} & & & & & \\
  \vdots & \vdots & \vdots & \vdots & \ddots & \\
  \vspace{0cm} & & & & & \\
  z_n & \Delta^0_n & \Delta^1_n & \Delta^2_n & \cdots & \Delta^{n-1}_n \\
  \vspace{0cm} & & & & & \\
  \hline
 \end{array}
\end{displaymath} 

\begin{displaymath}
 \begin{array}{|c|c|ccc|}
  \hline
  \vspace{0cm} & & & & \\
  z_2 & \tdel^0_2 & & & \\
  \vspace{0cm} & & & & \\
  z_3 & \tdel^0_3 & \tdel^1_3 & &  \\
  \vspace{0cm} & & & & \\
  \vdots & \vdots & \vdots & \ddots & \\
  \vspace{0cm} & & & & \\
  z_n & \tdel^0_n & \tdel^1_n & \cdots & \tdel^{n-2}_n \\
  \vspace{0cm} & & & & \\
  \hline
 \end{array}
\end{displaymath} 
\end{multicols}

\noindent where
$$\tdel^0_j=2\cdot\frac{\Delta^0_1-\Delta^0_j}{B_1(z_1)\cdot E_1(z_1)}\,\,\text{ for }j=2,\dots,n$$
and
$$\tdel^k_j=\frac{[\tdel^{k-1}_j,\tdel^{k-1}_{k+1}]}{[z_j,z_{k+1}]}\,\,\text{ for }k=1,\dots,n-2\,\text{ and }j=k+2,\dots,n,$$

\medskip

\noindent Recall that by hypothesis $|\Delta^k_j|\leq\eps$ for $k=1,\dots,n-1$ and $j=k+1,\dots,n$. This fact will imply that there exists $C>0$ depending 
only on $z_1,\dots,z_n$ and $n\in\N$ such that $|\tdel^k_j|\leq C\eps$ for $k=1,\dots,n-2\,\text{ and }j=k+2,\dots,n$. To this end, Theorem \ref{NP_BRW} 
applied to the triangle on the left says that there exists $g\in\U$ such that $g(z_j)=\Delta^0_j$ for $j=1,\dots,n$. Arguing as in the proof of Remark 
\ref{remark}, we can take $g_0(z)=0$ and the formula (\ref{formula_NP}) generates a solution $g\in\U$ such that $g(z_j)=w_j$ for $j=1,\dots,n$ and such that 
$|\Delta^1g(z;z_1)|\leq C\eps$ for $z\in\D$, where $C>0$ is a constant depending on $n$.

\medskip

\noindent Let $B_1^*(z)=B_1(z)/[z,z_1]$. Then there exists $C=C(\eta)>0$ such that $|B_1^*(z)|\geq C$ when $z\in \Omega_1$. Besides, observe that
$$\tdel^0_j=2\frac{\Delta^0_1-\Delta^0_j}{B_1(z_j)E_1(z_j)}=2\frac{1-\overline{\Delta^0_1}\Delta^0_j}{B_1^*(z_j)E_1(z_j)}\cdot\Delta^1_j,$$
for $j=2,\dots,n$. Then we can define the function
\begin{equation*}
 \widetilde{g}(z)=2\frac{1-\overline{\Delta^0_1}g(z)}{B_1^*(z)E_1(z)}\Delta^1g(z;z_1),
\end{equation*}
which is analytic on $\Omega_1$. Since $|E_1(z)|\simeq|1-\overline{\Delta^0_1}g(z)|$ when $z\in \Omega_1$, there exists $C>0$ such that 
$|\widetilde{g}(z)|\leq C\eps$ on $\Omega_1$. Furthermore by (\ref{estab}), and choosing $\eps$ small enough, the function $\widetilde{g}$ is in $\U$ and interpolates the values $\tdel^0_j$ in the nodes $z_j$ for $j=2,\dots,n$. Now, an argument, based on Lemma \ref{lemma1}, 
similar to the one used in the previous interpolation problem with the functions $f_k$, shows that there exists a constant $\widetilde{C}>0$ depending only on the 
sequence and $n$ such that 
$$|\tdel^k_j|=|\Delta^k\widetilde{g}(z_j;z_2,\dots,z_{k+1})|\leq\widetilde{C}\eps$$
for $k=1,\dots,n-2$ and $j=k+2,\dots,n$. 

\medskip

\noindent Consequently, for $\eps$ small enough the values $\{\tdel^{(j)}_m\}$ satisfy condition (\ref{ecc2}). And it is clear for any permutation 
of the points $\{z_2,\dots,z_n\}$. Applying now Theorem \ref{main_th} to $Z^{(2)}\cup\dots\cup Z^{(n)}$, we get that there exists a function $\widetilde{h}\in\U$ 
such that $\widetilde{h}(z^{(j)}_m)=\tdel^{(j)}_m$ for $j=2,\dots,n$ and $m\in\N$.

\medskip 

\noindent Finally the function $\widetilde{f}=\dfrac{1}{2}(h+\widetilde{h})$ is in $\U$ and $\widetilde{f}(z^{(j)}_m)=\widetilde{w}^{(j)}_m$ for $j=2,\dots,n$ 
and $m\in\N$. Hence (\ref{solution}) is the solution of the problem for $n$ separated sequences, and Theorem \ref{main_th} is proved.

\bigskip

\section{Sampling sequences for $\U$.}

\noindent In \cite{BN} the authors defined the concept of sampling sequence for the Bloch space, and gave a characterization of it. Recall that an analytic 
function $f$ defined on the unit disc is in the Bloch space $\B$ if there exists $C>0$ such that
$$\|f\|_{\B}=\sup_{z,w\in\D}\frac{|f(z)-f(w)|}{\beta(z,w)}\leq C.$$
The quantity $\|f\|_{\B}$ defines a semi-norm, and so $\B$ becomes a Banach space with the norm $\|f\|=|f(0)|+\|f\|_{\B}$. One may easily see that
$$\|f\|_{\B}=\sup_{z\in\D}(1-|z|^2)|f'(z)|.$$

\noindent Hence, a sequence $Z=\{z_n\}$ is said to be sampling for $\B$ if there exists $0<C<1$ such that for every $f\in\B$ one has that
$$\sup_{n\neq m}\frac{|f(z_n)-f(z_m)|}{\beta(z_n,z_m)}\geq C\|f\|_{\B}.$$
In the cited paper it was proved that $Z$ is a sampling sequence if and only if $Z$ is $R$-dense in $\D$, that is, there exists $R>0$ such that for any 
hyperbolic disc $D_h$ of radius greater than $R$ one has that $Z\cap D_h\neq\emptyset$. In \cite{Seip} one can find a deeper description of this problem.

\medskip

\noindent Focusing on the similarities between the space $\B$ and the class of functions $\U$, we see that in both of them the functions satisfy a Lipschitz type 
inequality. In the case of Bloch functions, the inequality involves Euclidean distance in the image space, and the best Lipschitz constant depends on the norm of the function. In the case of analytic self-mappings of the disc the Lipschitz type inequality is the Schwarz-Pick Lemma, so the distance involved is 
the hyperbolic metric, and the Lipschitz constant there is 1. Therefore, a sampling problem for $\U$ may be understood as the hyperbolic version of a sampling 
problem for $\B$. Nevertheless, $\U$ is not a Banach space, so we cannot use a norm here for giving a definition.

\medskip

\noindent Let $f\in\U$. Observe that for any $z,w\in\D$ 
$$\frac{\beta(f(z),f(w))}{\beta(z,w)}\leq\sup_{z\in\gamma}|f^h(z)|,$$
where $\gamma$ is the geodesic joining $z$ and $w$ and the hyperbolic derivative $f^h$ was defined in (\ref{hyp_der}). So then we can define the quantity
$$N(f)=\sup_{z,w\in\D}\frac{\beta(f(z),f(w))}{\beta(z,w)}=\sup_{z\in\D}|f^h(z)|\leq1.$$
The inequality is a direct consequence of the Schwarz-Pick Lemma. Now we can give a definition of a sampling sequence for $\U$ analogous to the one in the case 
of $\B$.

\begin{defi}
 A sequence $Z=\{z_n\}$ is a sampling sequence for $\U$ if there exists a constant $C\in(0,1)$ such that 
 $$\sup_{n\neq m}\frac{\beta(f(z_n),f(z_m))}{\beta(z_n,z_m)}\geq C\cdot N(f)$$
 for any $f\in\U$.
\end{defi}

\medskip

\noindent As in the case of $\B$, this definition is conformally invariant, in the sense that if $Z$ is a sampling sequence for $\U$ and $\tau$ is an 
automorphism of the disc then the sequence $\tau(Z)=\{\tau(z_j)\}$ is also a sampling sequence with the same sampling constant $C$. 

\medskip

\noindent We have proved the following characterization of sampling sequences for $\U$.

\begin{thm}
 A sequence $Z$ of pairwise different points in $\D$ is a sampling sequence for $\U$ if and only if there exists $R>0$ such that the sequence is $R$-dense in $\D$. 
\end{thm}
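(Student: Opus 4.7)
The plan is to use conformal invariance, of both $N(f)$ and of the sampling ratio $\beta(f(z_n),f(z_m))/\beta(z_n,z_m)$ under disc automorphisms, to reduce to a normalized model, and then to exploit $R$-density geometrically. For the \emph{sufficiency} direction, assume $Z$ is $R$-dense and fix $f\in\U$, $\varepsilon>0$. Pick $z_0\in\D$ with $|f^h(z_0)|\ge N(f)-\varepsilon$ and disc automorphisms $\phi,\psi$ with $\phi(0)=z_0$, $\psi(f(z_0))=0$. Setting $g=\psi\circ f\circ\phi$ and $Z'=\phi^{-1}(Z)$, one checks that $N(g)=N(f)$, $g\in\U$, $g(0)=0$, (after a rotation) $g'(0)=s'\ge N(f)-\varepsilon$, and $Z'$ is again $R$-dense; moreover the sampling ratio for $(f,Z)$ equals that for $(g,Z')$. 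By $R$-density $Z'$ contains a point $z_n\in D_h(0,R+\varepsilon)$, and a second application of $R$-density at a ball whose center lies at hyperbolic distance $2R$ from $z_n$ (which forces $z_n$ outside) produces a distinct $z_m\in Z'$ with $z_n,z_m\in D_h(0,5R)$. To bound $\beta(g(z_n),g(z_m))/\beta(z_n,z_m)$ below by $c(R)\,s'$, I write $g(z)=z\,h(z)$ with $h\in\U$, $h(0)=s'$; Schwarz--Pick applied to $h$ controls $|h(z)-s'|$ on $D_h(0,5R)$, so $g$ behaves like the linear map $z\mapsto s'z$ near $0$. The subtlety is that $g$ may have additional zeros in $D_h(0,5R)$, which for some pairs would give a small ratio; Jensen's formula bounds the number of such zeros by $\log(1/s')$, and $R$-density guarantees sufficiently many points in $Z'\cap D_h(0,5R)$ to pigeonhole a pair avoiding them.

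For the \emph{necessity} direction, assume $Z$ is not $R$-dense for any $R$; then there exist $w_k\in\D$ and $R_k\to\infty$ with $Z\cap D_h(w_k,R_k)=\emptyset$. Define the witness $f_k=\phi_k^{-1}\circ(\lambda\cdot\mathrm{id})\circ\phi_k\in\U$, where $\phi_k$ is the automorphism sending $w_k$ to $0$ and $\lambda\in(0,1)$ is a fixed constant. A chain-rule computation gives
\[
|f_k^h(z)|=\lambda\,\frac{1-\rho(z,w_k)^2}{1-\lambda^2\rho(z,w_k)^2},
\]
so $N(f_k)=\lambda$ is attained at $w_k$, while $|f_k^h(z)|=O(e^{-\beta(z,w_k)})$ away from $w_k$. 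For any pair $z_n,z_m\in Z$, bound $\beta(f_k(z_n),f_k(z_m))\le\int_\gamma|f_k^h|\,d\beta$ along the hyperbolic geodesic $\gamma$; the explicit form of $|f_k^h|$ together with the geodesic identity $\cosh(\beta(\gamma(t),w_k))=\cosh(d_{\min})\cosh(t-t_0)$ shows this integral is bounded by an absolute constant times $\lambda$. Splitting into the cases in which $\gamma$ avoids $D_h(w_k,R_k/2)$ (pointwise $|f_k^h|$ bound $O(\lambda e^{-R_k/2})$ on $\gamma$) and its complement (forcing $\beta(z_n,z_m)\ge R_k$, hence ratio $O(\lambda/R_k)$), one concludes $\sup_{n\neq m}\beta(f_k(z_n),f_k(z_m))/\beta(z_n,z_m)\to 0$ while $N(f_k)=\lambda$, contradicting sampling.

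The hard part is the sufficiency lower bound: when $s'$ is small, the linear approximation $g(z)\sim s'z$ is only effective on Euclidean discs of radius $O(s')$, which is strictly inside the hyperbolic ball $D_h(0,5R)$ containing $z_n,z_m$. Controlling the ratio on the whole region therefore requires the Jensen zero-count combined with the pigeonhole supplied by $R$-density, and this combinatorial selection is where the main technical work of the proof is expected to lie.
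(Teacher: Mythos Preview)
Your necessity argument is essentially the paper's: the paper uses the single map $f(z)=z/2$ together with conformal invariance to center the missing disc at the origin, which is exactly your conjugation $\phi_k^{-1}\circ(\lambda\cdot\mathrm{id})\circ\phi_k$ with $\lambda=1/2$. Your estimates are more detailed, but the idea is identical.

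The sufficiency argument has a genuine gap, and you have correctly located it. The Schwarz--Pick bound on $h=g/z$ only gives $\rho(h(z),s')\le\rho(z,0)$, so on $D_h(0,5R)$ the error $|g(z)-s'z|$ is bounded by a constant depending on $R$ but \emph{not} by anything proportional to $s'$; when $s'\ll 1$ the linear approximation carries no information on that fixed ball. Your proposed Jensen/pigeonhole fix does not close the gap either: Jensen bounds the number of zeros of $h$ in $D_h(0,5R)$ by a multiple of $\log(1/s')$, which tends to infinity as $s'\to 0$, whereas $R$-density supplies only a fixed number of points of $Z'$ in that ball, depending on $R$ alone. For small $s'$ there are simply not enough sample points to pigeonhole past the zeros. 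And even granting a zero-free pair, you have not explained why avoiding zeros of $g$ forces $\beta(g(z_n),g(z_m))/\beta(z_n,z_m)\gtrsim s'$; it is zeros of $g'$, not of $g$, that obstruct this ratio.

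The paper avoids this difficulty by a global contour-integral device rather than a local comparison. Arguing by contradiction and normalizing as you do ($f(0)=0$, $|f'(0)|\ge N(f)/2$), it uses $R$-density to place finitely many points $z_1,\dots,z_{n(R)}\in Z$ in the annulus $\{R<\beta(0,z)<3R\}$ so that the closed geodesic polygon $\gamma$ through them winds once around $0$. The assumed failure of sampling gives $\sum_j\beta(f(z_j),f(z_{j+1}))\le \frac{N(f)}{k}\sum_j\beta(z_j,z_{j+1})\le \frac{N(f)}{k}C_1(R)$. On the other hand, the residue of $f'(z)/z$ at $0$ yields
\[
\int_\gamma\frac{|f'(z)|}{1-|f(z)|^2}\,|dz|\ \ge\ \min_\gamma|z|\int_\gamma\Bigl|\frac{f'(z)}{z}\Bigr|\,|dz|\ \ge\ c(R)\Bigl|\oint_\gamma\frac{f'(z)}{z}\,dz\Bigr|\ =\ 2\pi\,c(R)\,|f'(0)|\ \ge\ C_2(R)\,N(f),
\]
with constants depending only on $R$. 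Comparing the two bounds forces $k\le C_3(R)$, the desired contradiction. The point is that the contour integral extracts $|f'(0)|$ directly and uniformly, so no case analysis on the size of $s'=N(f)$ is needed; this is precisely the mechanism your approach lacks.
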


\medskip

\noindent The characterization coincides with the one given for $\B$. Nevertheless the proof of the sufficiency is different from the one  given in \cite{BN}. 
Before proving the result, recall that the hyperbolic length of a curve $\gamma\in\D$ is
$$\ell_h(\gamma)=\int_{\gamma}\frac{|dz|}{1-|z|^2}.$$

\begin{proof}
 By conformal invariance, it is enough to prove the condition for a hyperbolic disc centered at the origin. Suppose that $Z=\{z_n\}$ is a sampling sequence for $\U$. Then for $f(z)=z/2$ there exists $C>0$ such that
 $$\sup_{n\neq m}\frac{\beta(f(z_n),f(z_m))}{\beta(z_n,z_m)}\geq C/2.$$
 Hence, for a fixed $\eta>0$, we have that
 $$\sup\left\{\frac{\beta(z/2,w/2)}{\beta(z,w)}\,:\,|z|>r,|w|>r,\,\beta(z,w)\geq\eta\right\}\xrightarrow[r\rightarrow1]\,0.$$
 
 \noindent Suppose now that $Z$ is $R$-dense in $\D$. We argue by contradiction. If $Z$ is not a sampling sequence for $\U$, then for every $k\in\N$ there exists 
 $f_k\in\U$ such that
 $$\frac{1}{N(f_k)}\sup_{n\neq m}\frac{\beta(f_k(z_n),f_k(z_m))}{\beta(z_n,z_m)}<\frac{1}{k}.$$
 Fix $k$ large enough. Without loss of generality, we may take $f=f_k$ such that $f(0)=0$ and $|f'(0)|\geq N(f)/2$. Since $Z$ is $R$-dense, we can choose a 
 finite number $n(R)$ of points $z_j\in Z$ such that $R<\beta(0,z_j)<3R$, and $\beta(z_i,z_j)\geq R$. Assuming that $\arg{z_j}<\arg{z_{j+1}}$, let $\gamma$ be the 
 closed curve formed by the geodesics joining every $z_j$ with $z_{j+1}$ and $z_{n(R)}$ with $z_1$. Then, on the one hand
 $$\ell(f(\gamma))\leq\sum_{j}\beta(f(z_j),f(z_{j+1}))\leq\frac{N(f)}{k}\sum_{j}\beta(z_j,z_{j+1})\leq \frac{N(f)}{k}C_1(R),$$
 and on the other hand
 $$\ell(f(\gamma))=\int_\gamma\frac{|f'(z)|}{1-|f(z)|^2}|dz|\geq\int_\gamma|z|\left|\frac{f'(z)}{z}\right||dz|\geq C_2(R)|f'(0)|.$$
 Hence, there exists $C_3(R)>0$ such that $k\leq C_3(R)$. This finishes the proof.
\end{proof}

\bibliographystyle{plain}
\bibliography{monreal_papadimitrakis}

\end{document}